\documentclass[a4paper, 11pt]{article}

\usepackage[
            includefoot,  
            marginparwidth=0in,     
            marginparsep=0in,       
            margin=1.45in,               
            includemp]{geometry}

\usepackage{bbm}
\usepackage{float}
\usepackage{graphicx}                  
\usepackage{amssymb}
\usepackage{amsfonts}
\RequirePackage{amsmath}
\RequirePackage{amsthm}

\usepackage{color}

\usepackage{fancyhdr}

\newtheorem{thm}{Theorem}[section]
\newtheorem{defn}[thm]{Definition}
\newtheorem{prop}[thm]{Proposition}

\newtheorem{rque}{Remark} [section]
\newtheorem{cor}[thm]{Corollary}

\pagestyle{myheadings}

\newtheorem*{assumption}{Assumption}

\DeclareMathOperator{\Hess}{Hess}

\DeclareMathOperator{\Var}{Var}

\DeclareMathOperator{\id}{Id}

\newcommand{\R}{\mathbb{R}}

\newcommand{\E}{\mathbb{E}}

\begin{document}

\title{Existence of Stein Kernels under a Spectral Gap, and Discrepancy Bounds}
\date{\today}

\author{Thomas~A.~Courtade$^*$, Max Fathi$^{\dagger}$ and Ashwin Pananjady$^*$\\
~\\
$^{*}$UC Berkeley, Department of Electrical Engineering and Computer Sciences\\
$^{\dagger}$CNRS \& Universit\'e Paul Sabatier, Institut de Math\'ematiques de Toulouse\\
}

\maketitle

\begin{abstract}
We establish existence of Stein kernels for probability measures on $\R^d$ satisfying a Poincar\'e inequality, and obtain bounds on the Stein discrepancy of such measures. Applications to quantitative central limit theorems are discussed, including a new CLT in Wasserstein distance $W_2$ with optimal rate and dependence on the dimension. As a byproduct, we  obtain a stable version of an estimate of the Poincar\'e constant of probability measures under a second moment constraint.  The results extend more generally to the setting of converse weighted Poincar\'e inequalities. The proof is based on simple arguments of calculus of variations. 

Further, we  establish two general properties enjoyed by  the Stein discrepancy, holding whenever a Stein kernel exists:  Stein discrepancy is strictly decreasing along the CLT, and it  controls the skewness of a random vector. 
\end{abstract}

\section{Introduction}
What is known as Stein's method is a vast array of concepts and techniques for proving quantitative convergence of sequences of random variables to some limit. These ideas originated in the work of Stein \cite{Ste72, Ste86}, and have found many applications in the study of quantitative central limit theorems, Poisson and geometric approximation, concentration of measure, random matrix theory and free probability. We refer to the survey \cite{Ros11} for an overview of the topic. 

In this work, we shall be interested in one particular concept used in this setting: Stein kernels (also known as Stein factors) and their use in proving quantitative central limit theorems. To this end, let $\nu$ be a probability measure on $\R^d$. A matrix-valued function $\tau_{\nu} : \R^d \longrightarrow \mathcal{M}_d(\R)$ is said to be a \emph{Stein kernel} for $\nu$ (with respect to the standard Gaussian measure $\gamma$ on $\R^d$) if for any smooth test function $\varphi$ taking values in $\mathbb{R}^d$,  we have
\begin{equation} \label{eq_stein}
\int{x \cdot \varphi d\nu} = \int{\langle \tau_{\nu}, \nabla \varphi \rangle_{\mathrm{HS}} d\nu} 
\end{equation}
where $\langle \cdot, \cdot \rangle_{\mathrm{HS}}$ stands for the usual Hilbert-Schmidt scalar product on $\mathcal{M}_d(\R)$. For applications, it generally suffices to consider the restricted class of test functions $\varphi$ satisfying $\int (|\varphi|^2 + \|\nabla \varphi\|_{\mathrm{HS}}^2 )d\nu <\infty$, in which case both integrals in \eqref{eq_stein} are well-defined as soon as  $\tau_{\nu} \in L^2(\nu)$, provided $\nu$ has finite second moments.  We shall adopt this convention throughout.

In parts of the literature, the notion of Stein kernel is replaced by the relation
\begin{equation}\label{SKotherDef}
\int{x \cdot \nabla \varphi d\nu} = \int{\langle \tau_{\nu}, \Hess \varphi \rangle_{\mathrm{HS}} d\nu}
\end{equation}
for all smooth real-valued functions $\varphi$. This notion is slightly weaker compared to \eqref{eq_stein} since it only requires test functions that are gradients, but for some applications it still suffices. Our results will hold for either definition, but we shall adopt the stronger notion \eqref{eq_stein} throughout since the improvement comes for free.   

  The  motivation behind the definition is that, since the Gaussian measure is the only probability distribution satisfying the integration by parts formula
\begin{equation} \label{ibp_gauss}
\int{x \cdot \varphi d\gamma} = \int{\operatorname{div} (\varphi)d\gamma},
\end{equation} 
one can take the identity matrix, denoted by $\id$, as a Stein kernel if and only if the measure $\nu$ is  equal to $\gamma$. In this way,  the Stein kernel can be seen as a measure of how far $\nu$ is from being a standard Gaussian measure in terms of how much it violates the integration by parts formula \eqref{ibp_gauss}. Those kernels appear implicitly in many works on Stein's method, and have recently been the topic of more direct investigations \cite{AMV10, NPR10, NPS14a, NPS14b, LNP15}.

 The question of when a Stein kernel exists for a particular measure $\nu$ is a nontrivial one, and only a few results are known along this direction.     In dimension one, it suffices to have mean zero and a density with connected support to ensure existence.  Indeed, if $\nu$ has a density $p$ that does not vanish on the (possibly infinite) interval $(a,b)$, then the Stein kernel $\tau_{\nu}$   is unique up to sets of measure zero, and  is given by  
\begin{equation} \label{eq_1d}
\tau_{\nu}(x) := \frac{1}{p(x)}\int_x^{\infty}{yp(y)dy}.
\end{equation}
In general, however, Stein kernels are not necessarily unique  when they exist.   A   more detailed study of the one-dimensional case and its generalizations to non-Gaussian reference measures can be found in \cite{LRS17}. 

In higher dimension, existence of Stein kernels has been previously studied using the tools of Malliavin calculus \cite{NP12}. In particular, if a random variable can be realized as the image of a Gaussian random variable by a $C^{\infty}$ function with derivatives of at most polynomial growth, then a Stein kernel exists.  Another explicit formula for one-dimensional random variables that arise as smooth functions of some Gaussian vector was also obtained in \cite{Cha09}.    However, given a probability distribution, it may be difficult to find such a smooth function. For example, Brenier's theorem in optimal transport \cite{Bre91} tells us that under fairly general assumptions there exists a map sending a Gaussian random variable onto the distribution considered, but in general it is not smooth enough to apply the arguments of \cite{NP12}. 

Our main results are roughly divided into two categories:  sufficient conditions for existence of Stein kernels in arbitrary dimension, and general bounds on the so-called \emph{Stein discrepancy} which hold whenever a Stein kernel exists.   Specifically, we first show that if $\nu$ satisfies a Poincar\'e inequality, or more generally a converse weighted Poincar\'e inequality, then a Stein kernel exists.  This affirmatively  answers a question raised in \cite{NPS14b}.   In doing so, we  obtain bounds on the associated {Stein discrepancy} for measures satisfying a Poincar\'e inequality.  These estimates are dimension-free and  depend only on the second moment and the Poincar\'e constant.   We further establish two properties enjoyed by Stein discrepancy that hold in general, whenever a Stein kernel exists.  First, like entropy and Fisher information, Stein discrepancy is  monotone along the CLT.  Second, Stein discrepancy is bounded from below by the skewness of a random vector. 
 
 These results lead to optimal rates of convergence in the multidimensional central limit theorem in Wasserstein distance $W_2$, as well as entropic CLTs, with suboptimal rate. Our main estimate can also be reformulated as a quantitative improvement of the fact that among all isotropic measures, the standard Gaussian measure has the best Poincar\'e constant.  

%
%

\section{On existence of Stein kernels}

Let $\nu$ be a probability measure on $\R^d$. Henceforth, we  make the following assumption: 

\begin{assumption}
The measure $\nu$ is absolutely continuous with respect to the Lebesgue measure, and has finite second moment, i.e. $\int{|x|^2d\nu} < \infty$. 
\end{assumption}

We shall work in the Sobolev space $W^{1,2}_{\nu}$ of vector valued functions, which we define as as the closure of the set of all smooth vector-valued functions $f : \R^d \longrightarrow \R^d$ in $L^2(\nu)$, with respect to the usual Sobolev norm $\int{(|f|^2 + \|\nabla f\|_{\mathrm{HS}}^2)d\nu}$. We also define its restriction to the set of (vector-valued) functions with average zero 
$W^{1,2}_{\nu, 0} := W^{1,2}_{\nu}\bigcap \left\{f : \int{fd\nu} = 0\right\}.$

\begin{defn}
A function $\tau_{\nu} : \R^d \longrightarrow \mathcal{M}_d(\R)$ is a Stein kernel for $\nu$ if for any $\varphi \in W^{1,2}_{\nu}$ equation \eqref{eq_stein} holds. The Stein discrepancy is defined as 
$$S(\nu|\gamma)^2 := \inf \int{\|\tau_{\nu} - \mathrm{Id}\|_{\mathrm{HS}}^2d\nu},$$
where the infimum is taken over all Stein kernels of $\nu$, and takes value $+\infty$ if no Stein kernel exists. 
\end{defn}

 One of the main applications of Stein kernels is that bounds on the Stein discrepancy can be used to obtain rates of convergence in the central limit theorem, as discussed in Section \ref{sec:Appl}.

We now introduce the functional inequalities we shall use as criteria for existence of Stein kernels. 

\begin{defn}
A probability measure $\nu$ is said to satisfy a Poincar\'e inequality with constant $C_p$ if for any locally lipschitz function $f \in L^2(\nu)$ we have
$$\Var_{\nu}(f) \leq C_p \int{|\nabla f|^2d\nu}.$$
\end{defn}
A measure satisfying a Poincar\'e inequality is also said to have spectral gap.  The terminology comes from the fact that $C_p^{-1}$ is a lower bound on the smallest positive eigenvalue of the operator $-\Delta + \nabla H \cdot \nabla$ in $L^2(\nu)$, where $H = -\log \frac{d\nu}{dx}$. 

There is a vast literature on Poincar\'e inequalities, with many examples and abstract results giving sufficient conditions for one to hold. In particular, the class of measures satisfying a Poincar\'e inequality is stable under bounded perturbations and  tensor products, and  it contains the set of all log-concave probability measures. A more general sufficient condition for a measure with density $e^{-V}$ to have  spectral gap is 
$$\exists a \in (0,1), R \geq 0, c> 0 \text{ such that } a|\nabla V(x)|^2 - \Delta V(x) \geq c \hspace{3mm} \forall |x| \geq R,$$
which was obtained in \cite{BBCG08}. We refer to \cite{BGL14} for more background on Poincar\'e inequalities. 

We shall also consider a more general type of functional inequalities: 

\begin{defn}
A probability measure $\nu$ is said to satisfy a converse weighted Poincar\'e inequality with weight $\omega : \R \longrightarrow \R_+^*$ if for any locally lipschitz $f \in L^2(\nu)$ we have
$$\underset{c \in \R}{\inf} \hspace{1mm} \int{(f-c)^2\omega d\nu} \leq \int{|\nabla f|^2d\nu}.$$
\end{defn}

This definition originates from \cite{BL09}, and was further studied in \cite{CGGR10}. Such inequalities are related to measure concentration for heavy-tailed distributions. We could incorporate a constant in front of the Dirichlet form in the definition, but we have chosen to absorb it into the weight to reduce notations, so that a Poincar\'e inequality with constant $C_P$ corresponds to a converse weighted Poincar\'e inequality with constant weight $C_P^{-1}$. 

\subsection{Finite Poincar\'e constant ensures existence of a Stein kernel}

Our main result of this section is that a (converse weighted) Poincar\'e inequality ensures existence of a Stein kernel, and moreover that the Poincar\'e constant controls the Stein discrepancy. Stated more precisely, 

\begin{thm} \label{main_thm_stein}
Assume that $\nu$ is centered (i.e. has mean zero) and satisfies a converse weighted Poincar\'e inequality with weight $\omega$, and such that $\int{|x|^2\omega^{-1} d\nu} < \infty$. Then there exists a unique function $g \in W^{1,2}_{\nu, 0}$ such that $\tau_{\nu} = \nabla g$ is a Stein kernel for $\nu$. Moreover, 
\begin{equation} \label{steinDiscBound}
\int{\|\tau_{\nu}\|_{\mathrm{HS}}^2d\nu} \leq \int{|x|^2\omega^{-1} d\nu}.
\end{equation}

In particular, if $\nu$ is centered and satisfies a Poincar\'e inequality with constant $C_p$, the above result applies and 
\begin{align} \label{steinDiscBound_poinca}
\int{\|\tau_{\nu}\|_{\mathrm{HS}}^2d\nu} \leq C_p \int{|x|^2d\nu}
\end{align}
so that the Stein discrepancy satisfies
$$S(\nu|\gamma)^2 \leq (C_p-2) \int{|x|^2d\nu}  + d$$
%
\end{thm}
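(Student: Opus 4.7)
The plan is to use the direct method of the calculus of variations. I would seek a Stein kernel of the gradient form $\tau_\nu = \nabla g$ by minimizing the convex quadratic functional
$$F(g) := \frac{1}{2} \int \|\nabla g\|_{\mathrm{HS}}^2 d\nu - \int x \cdot g \, d\nu$$
over vector fields $g \in W^{1,2}_{\nu, 0}$. The motivation is that the first-order optimality condition at a minimizer is precisely the Stein identity \eqref{eq_stein} with $\tau_\nu = \nabla g$, tested against variations $\varphi \in W^{1,2}_{\nu, 0}$; the identity then extends to arbitrary $\varphi \in W^{1,2}_\nu$ by writing $\varphi = (\varphi - \bar\varphi) + \bar\varphi$ with $\bar\varphi := \int \varphi \, d\nu$, using $\nabla \bar\varphi = 0$, and invoking the centering hypothesis $\int x \, d\nu = 0$ to kill the constant piece.

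The key analytic ingredient is a coercivity estimate for $F$. Applying the converse weighted Poincar\'e inequality component by component yields, for every $g \in W^{1,2}_{\nu, 0}$, a constant vector $c \in \R^d$ with
$$\int |g - c|^2 \omega \, d\nu \leq \int \|\nabla g\|_{\mathrm{HS}}^2 d\nu.$$
Combining this with Cauchy--Schwarz and the centering of $\nu$ (which gives $\int x \cdot c \, d\nu = 0$),
$$\int x \cdot g \, d\nu = \int x \cdot (g - c) \, d\nu \leq \left(\int |x|^2 \omega^{-1} d\nu\right)^{1/2} \left(\int \|\nabla g\|_{\mathrm{HS}}^2 d\nu\right)^{1/2}.$$
Hence $F$ is bounded below, and minimizing sequences have bounded gradient norm in $L^2(\nu)$. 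A standard direct-method argument, using weak lower semi-continuity of the Dirichlet form, then produces a minimizer $g$. Uniqueness of $g$, hence of $\tau_\nu = \nabla g$, follows from strict convexity of $F$ on $W^{1,2}_{\nu, 0}$: if $h \in W^{1,2}_{\nu, 0}$ with $\nabla h \equiv 0$ then $h$ is constant, and the zero-mean constraint forces $h = 0$.

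To derive \eqref{steinDiscBound}, I would substitute the minimizer $g$ itself as test field in the Stein identity to obtain $\int \|\nabla g\|_{\mathrm{HS}}^2 d\nu = \int x \cdot g \, d\nu$, chain this with the Cauchy--Schwarz bound above, and divide through by $(\int \|\nabla g\|_{\mathrm{HS}}^2 d\nu)^{1/2}$. Taking $\omega \equiv 1/C_p$ recovers \eqref{steinDiscBound_poinca}. For the Stein discrepancy, expand $\|\tau_\nu - \mathrm{Id}\|_{\mathrm{HS}}^2 = \|\tau_\nu\|_{\mathrm{HS}}^2 - 2\,\mathrm{tr}(\tau_\nu) + d$, integrate, and use \eqref{eq_stein} with the linear test field $\varphi(x) = x$ to compute $\int \mathrm{tr}(\tau_\nu) \, d\nu = \int |x|^2 d\nu$; substituting yields $S(\nu|\gamma)^2 \leq (C_p - 2) \int |x|^2 d\nu + d$.

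The main obstacle I anticipate is the well-posedness of the variational problem in the weighted CWP setting: the CWP inequality naturally controls $g$ only in the weighted norm $L^2(\omega \, d\nu)$, whereas the theorem is framed in the standard space $W^{1,2}_{\nu, 0}$. Bridging this gap requires exploiting the centering of $\nu$ to absorb the arbitrary shift $c$ from the CWP before applying Cauchy--Schwarz against the moment assumption $\int |x|^2 \omega^{-1} d\nu < \infty$. In the classical Poincar\'e case this subtlety disappears, and the argument collapses to a clean application of Lax--Milgram on the Hilbert space $W^{1,2}_{\nu, 0}$.
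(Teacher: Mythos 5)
Your proposal is correct and follows essentially the same route as the paper: the paper invokes the Lax--Milgram theorem for the bilinear form $\int\langle\nabla f,\nabla h\rangle_{\mathrm{HS}}\,d\nu$ on $W^{1,2}_{\nu,0}$ and then observes that the solution minimizes exactly the functional $J$ you minimize directly, with the identical coercivity estimate (converse weighted Poincar\'e inequality, Cauchy--Schwarz, and the centering of $\nu$ to absorb the shift $c$) and the identical derivation of \eqref{steinDiscBound} by testing the Euler--Lagrange identity against $g$ itself. The only cosmetic differences are that you use the direct method where the paper cites Lax--Milgram, and that you spell out the final expansion $\|\tau_\nu-\id\|_{\mathrm{HS}}^2=\|\tau_\nu\|_{\mathrm{HS}}^2-2\operatorname{tr}(\tau_\nu)+d$ together with the test function $\varphi(x)=x$, a computation the paper leaves implicit.
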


The centering assumption on $\nu$ is necessary for the theorem to hold.  Indeed, a necessary condition for existence of a Stein kernel is that $\nu$ is centered, seen by taking $\varphi = 1$ in the defining equation \eqref{eq_stein}.     

In most situations, we shall be using the above bounds for measures satisfying a Poincar\'e inequality and with second moment normalized with respect to dimension (e.g., as is the case for isotropic measures): 
\begin{cor} \label{cor_main_thm}
Let $\nu$ be a centered probability measure on $\R^d$ satisfying a Poincar\'e inequality with constant $C_p$, normalized so that $\int |x|^2 d\nu=d$. Then 
$$S(\nu|\gamma)^2 \leq d(C_p-1).$$
\end{cor}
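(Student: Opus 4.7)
The plan is to derive this as an essentially immediate consequence of Theorem \ref{main_thm_stein}: expand the Hilbert--Schmidt norm appearing in the Stein discrepancy, identify the cross term via the defining equation \eqref{eq_stein}, and substitute the bound \eqref{steinDiscBound_poinca}.

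First I would let $\tau_\nu = \nabla g$ be the Stein kernel produced by Theorem \ref{main_thm_stein} (note that the hypothesis $\int |x|^2 \omega^{-1} d\nu = C_p \int|x|^2 d\nu = C_p d < \infty$ is automatic here), and write
\begin{equation*}
\int \|\tau_\nu - \mathrm{Id}\|_{\mathrm{HS}}^2 d\nu = \int \|\tau_\nu\|_{\mathrm{HS}}^2 d\nu - 2\int \langle \tau_\nu, \mathrm{Id}\rangle_{\mathrm{HS}} d\nu + d.
\end{equation*}
The first term is at most $C_p d$ by \eqref{steinDiscBound_poinca} and the isotropy-type normalization $\int|x|^2 d\nu = d$.

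Next I would evaluate the cross term exactly using the defining Stein identity \eqref{eq_stein} with the linear test function $\varphi(x) = x$, for which $\nabla \varphi = \mathrm{Id}$ and $\langle \tau_\nu, \mathrm{Id}\rangle_{\mathrm{HS}} = \mathrm{tr}(\tau_\nu)$. This gives
\begin{equation*}
\int \mathrm{tr}(\tau_\nu) d\nu = \int x \cdot x \, d\nu = \int|x|^2 d\nu = d.
\end{equation*}
One should briefly check that $\varphi(x) = x$ is admissible, i.e.\ lies in $W^{1,2}_\nu$; this is immediate since $\int(|x|^2 + \|\mathrm{Id}\|_{\mathrm{HS}}^2)d\nu = d + d < \infty$ and smooth compactly supported approximations converge in the Sobolev norm.

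Combining the two displays yields
\begin{equation*}
S(\nu|\gamma)^2 \leq \int \|\tau_\nu - \mathrm{Id}\|_{\mathrm{HS}}^2 d\nu \leq C_p d - 2d + d = d(C_p-1),
\end{equation*}
which is the claimed bound. There is no genuine obstacle here; the only thing to be slightly careful about is that the conclusion requires the stronger Stein equation \eqref{eq_stein} applied to a non-centered test function, so one should verify that $x \mapsto x$ is covered by the definition adopted in Theorem \ref{main_thm_stein} (which it is, since that result produces a $\tau_\nu$ satisfying \eqref{eq_stein} for all $\varphi \in W^{1,2}_\nu$, not just for those with mean zero).
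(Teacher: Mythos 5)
Your proof is correct and follows exactly the route the paper intends: the bound $S(\nu|\gamma)^2 \leq (C_p-2)\int|x|^2 d\nu + d$ stated in Theorem \ref{main_thm_stein} is obtained precisely by expanding $\|\tau_\nu-\id\|_{\mathrm{HS}}^2$ and evaluating the cross term with the admissible (centered, square-integrable) test function $\varphi(x)=x$, and the corollary is then just the substitution $\int|x|^2 d\nu = d$. Your side remarks on the admissibility of $\varphi(x)=x$ and on the finiteness of $\int|x|^2\omega^{-1}d\nu$ are accurate and complete the argument.
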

\noindent A few remarks are in order:  
\begin{itemize}
\item The standard Gaussian measure $\gamma$ has Poincar\'e constant $C_p=1$, so the above estimates dictate $S(\gamma|\gamma)=0$, as desired.  
\item Stein discrepancy  is additive on product measures (i.e., $S(\nu^{\otimes k}|\gamma^{\otimes k})^2 = k S(\nu|\gamma)^2$), whereas the Poincar\'e constant is independent of dimension (i.e., $C_p(\nu^{\otimes k}) = C_p(\nu)$).  Thus, our estimates are dimension-free in nature.
\item A converse weighted Poincar\'e inequality is by no means  necessary  for existence of a Stein kernel. In dimension one, the formula \eqref{eq_1d} works in more general situations.  We will see another multi-dimensional example further on.
\end{itemize}

\begin{proof}[Proof of Theorem \ref{main_thm_stein}]
The result follows from an application of the Lax-Milgram theorem \cite{LM54}.  Indeed, $\int \langle \nabla f, \nabla h\rangle_{\mathrm{HS}}d \nu$ is a continuous bi-linear functional on $W^{1,2}_{\nu, 0}\times W^{1,2}_{\nu, 0}$, and dominates the weighted Sobolev norm $\int{\omega |f|^2 + |\nabla f|^2d\nu}$ for non-constant functions by the assumption that $\nu$ satisfies a converse weighted Poincar\'e inequality. Finally, $ f \longrightarrow \int{f \cdot x d\nu}$ on $W^{1,2}_{\nu, 0}$ is a continuous linear form since for any $\vec{c} = (c_1,.., c_d) \in \R^d$ we have
\begin{align*}
\int{x \cdot f d\nu} &= \int{x \cdot (f - \vec{c})d\nu} \\
&\leq \left(\int{\omega^{-1}|x|^2d\nu}\right)^{1/2}\left(\underset{\vec{c}}{\inf} \hspace{1mm} \int{\omega \sum |f_i - c_i|^2d\nu}\right)^{1/2} \\
&\leq \left(\int{\omega^{-1}|x|^2d\nu}\right)^{1/2}\left(\int{|\nabla f|^2d\nu}\right)^{1/2}.
\end{align*}
Hence there exists a unique $g \in W^{1,2}_{\nu, 0}$ such that 
\begin{equation} \label{eq_stein_proof}
\int{\langle \nabla g, \nabla f\rangle_{\mathrm{HS}} d\nu} = \int{ x\cdot f d\nu}
\end{equation}
for any $f \in W^{1,2}_{\nu, 0}$, and in particular $\nabla g$ is a Stein kernel. 

In addition,  $g$ minimizes the functional $J(f) := \frac{1}{2}\int \|\nabla f\|_{\mathrm{HS}}^2d \nu
 - \int{x \cdot f d\nu}$. Indeed, 
\begin{align*}
J(f) &=  \frac{1}{2}\int  \|\nabla f\|_{\mathrm{HS}}^2 d \nu - \int{x \cdot f d\nu}\\
&=  \frac{1}{2}\int \|\nabla f\|_{\mathrm{HS}}^2 d \nu - \int{\langle \nabla g, \nabla f\rangle_{\mathrm{HS}} d\nu} \\
&\geq -\frac{1}{2}\int \|\nabla g\|_{\mathrm{HS}}^2 d \nu = J(g)
\end{align*}
where we have just applied \eqref{eq_stein_proof} to integrate by parts to go from the first to the second line, and applied the Cauchy-Schwarz inequality for the final lower bound, while applying again \eqref{eq_stein_proof} with $f = g$ yields $J(g) =  -\frac{1}{2}\int \|\nabla g\|_{\mathrm{HS}}^2 d \nu$. The Cauchy-Schwarz inequality and the converse weighted Poincar\'e inequality for $\nu$ then give, after a simple computation,    
\begin{align}
 -\frac{1}{2}\int \|\nabla g\|_{\mathrm{HS}}^2 d \nu = J(g) &\geq \frac{1}{2} \int |\nabla g|^2d\nu -  \left( \int \omega|g|^2 d\nu \right)^{1/2}\left( \int |x|^2\omega^{-1} d\nu \right)^{1/2}\\
&\geq  -\frac{1}{2}\int \omega^{-1}|x|^2 d\nu, 
\end{align}
establishing \eqref{steinDiscBound}. 
%
%
\end{proof}

\begin{rque} \label{rmk_no_sg}  \normalfont
Even when $\nu$ does not satisfy a (converse weighted) Poincar\'e inequality, if $g\in W^{1,2}_{\nu, 0}$ minimizes the functional $J: f \mapsto \frac{1}{2}\int \|\nabla f\|_{\mathrm{HS}}^2 - \int{x \cdot f d\nu}$, then $\nabla g$ is a Stein Kernel for $\nu$.  To see this, consider a perturbation in the direction $h \in W^{1,2}_{\nu, 0}$, which gives:
\begin{align*}
0 &\leq J(g + \varepsilon h)-J(g) \\
&=  {\varepsilon}\left(\int \langle \nabla g,\nabla h\rangle  d \nu - \int x \cdot h d \nu \right)+ \frac{\varepsilon^2}{2}\int |\nabla h|^2 d \nu.
\end{align*}
Letting $\varepsilon\downarrow 0$ shows that  $\int \langle \nabla g,\nabla h\rangle  d \nu \geq  \int x \cdot h d \nu$.  Replacing $h$ by $-h$ gives the reverse inequality.   

Hence, a sufficient condition for existence of a Stein kernel is that the functional $J$ has a minimum.   Stated another way, there exists a finite constant $c>0$ such that 
\begin{align} \label{generalized_condition}
\left(\int{x \cdot f d\nu}\right)^2 \leq c \int \|\nabla f\|_{\mathrm{HS}}^2 d \nu   ~~~\forall f \in W^{1,2}_{\nu, 0},
\end{align}
and moreover, equality is attained for some nonzero function $g$. This should be compared against the definition of the Poincar\'e inequality. 
\end{rque}

Converse weighted Poincar\'e inequalities have been established for a large class of heavy-tailed probability distributions via Lyapunov function techniques in \cite{CGGR10}. Here are some examples from \cite{BL09, CGGR10}: 

\begin{cor}
Stein kernels exist for the following probability distributions on $\R^d$: 

(i) Generalized Cauchy distributions $\nu_{\beta}(dx) := Z^{-1}(1 + |x|^2)^{-\beta}$ for $\beta > \max((d + 4)/2, d)$;

(ii) Probability measures of the form $\nu(dx) = Z^{-1}e^{-V(x)^p}$ with $V$ convex and $p > 0$, as soon as $\int{|x|^{2 + 2(1-p)}d\nu} < \infty$. In particular, subexponential distributions with density proportional to $e^{-|x|^p}$ with $p \in (0,1)$. 
\end{cor}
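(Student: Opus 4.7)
The plan is to apply Theorem \ref{main_thm_stein} to each case, invoking the converse weighted Poincar\'e inequalities that have already been established in \cite{BL09, CGGR10} and verifying the accompanying integrability condition $\int|x|^2\omega^{-1}d\nu<\infty$. Thus the proof reduces to (a) identifying the weight $\omega$ supplied by the cited references in each case, and (b) a routine moment check.

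For (i), \cite{BL09, CGGR10} show that the generalized Cauchy measure $\nu_\beta$ satisfies a converse weighted Poincar\'e inequality---valid once $\beta>d$---with weight $\omega(x)$ comparable to $(1+|x|^2)^{-1}$. Substituting this into \eqref{steinDiscBound} yields the condition $\int(1+|x|^2)|x|^2\,d\nu_\beta<\infty$, i.e. finiteness of the fourth moment of $\nu_\beta$. Since $\int|x|^{2k}\,d\nu_\beta<\infty$ iff $2\beta>d+2k$, this requires $2\beta>d+4$; intersecting with the constraint $\beta>d$ coming from the functional inequality gives exactly $\beta>\max((d+4)/2,d)$, and Theorem \ref{main_thm_stein} produces the Stein kernel.

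For (ii), \cite{CGGR10} construct---via the Lyapunov-function method applied to the operator naturally associated with $\nu$---a converse weighted Poincar\'e inequality whose weight $\omega(x)$ is of order $|x|^{2(p-1)}$ at infinity. The integrability condition $\int|x|^2\omega^{-1}d\nu<\infty$ then reads $\int|x|^{2+2(1-p)}d\nu<\infty$, which is precisely the stated hypothesis. (Note that for $p\geq 1$ the exponent $2+2(1-p)$ is at most $2$, so the condition is automatic from the standing finite-second-moment assumption, consistent with the fact that such $\nu$ are close to log-concave and already admit an ordinary Poincar\'e inequality.) Theorem \ref{main_thm_stein} again delivers the Stein kernel.

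There is no substantive obstacle here beyond clean citation and elementary moment estimates, because the functional-analytic content has been fully packaged into the cited converse weighted Poincar\'e inequalities. The only mild subtlety is one of normalization: in \cite{BL09, CGGR10} a multiplicative constant is sometimes placed in front of the Dirichlet form, whereas in our formulation the constant is absorbed into $\omega$. This amounts to a harmless rescaling of $\omega$ by a fixed positive constant and does not alter either the validity of the inequality or the finiteness of $\int|x|^2\omega^{-1}d\nu$.
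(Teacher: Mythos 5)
Your proposal is correct and is exactly the argument the paper intends: the corollary is stated as an immediate consequence of Theorem \ref{main_thm_stein} together with the converse weighted Poincar\'e inequalities of \cite{BL09, CGGR10}, with the exponent thresholds coming precisely from the moment condition $\int |x|^2\omega^{-1}d\nu<\infty$ for the weights $\omega(x)\asymp(1+|x|^2)^{-1}$ and $\omega(x)\asymp|x|^{2(p-1)}$ respectively. The paper gives no further detail, so your verification of the moment checks and the normalization remark simply fills in what is left implicit.
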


Note that these examples typically do not satisfy a classical Poincar\'e inequality. 

Of course, there exist probability measures that satisfy the \eqref{generalized_condition} condition without satisfying a (converse weighted) Poincar\'e inequality. For example, if we consider two disjoint closed annuli $C_1$ and $C_2$ that are centered around the origin, then the uniform probability measure on $C_1 \cup C_2$ does not satisfy a converse weighted Poincar\'e inequality, yet it does satisfy \eqref{generalized_condition}.

We conclude this section by noting that, as pointed out in \cite{LNP15}, for log-concave probability measures (which always satisfy a Poincar\'e inequality \cite{BBCG08}) there is a reverse version of our inequality: 

\begin{prop} \label{prop_rev_stein_est}
Let $\nu$ be a centered log-concave measure. Then for some numerical constant $C$, 
$$C_p \leq C(1 +  S(\nu|\gamma)^2). $$
\end{prop}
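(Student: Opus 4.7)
The plan is to combine two ingredients: a universal identity linking the covariance of $\nu$ to any Stein kernel (which does not use log-concavity), and the classical fact, valid for log-concave measures, that the Poincar\'e constant is controlled by the operator norm of the covariance.

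First, for any centered probability measure $\nu$ and any Stein kernel $\tau_\nu$, I would derive the identity $\operatorname{Cov}(\nu) = \int \tau_\nu \, d\nu$ by testing \eqref{eq_stein} against the linear maps $\varphi(x) = A x$ for $A \in \mathcal{M}_d(\R)$: the left-hand side becomes $\langle A, \operatorname{Cov}(\nu) \rangle_{\mathrm{HS}}$ and the right-hand side becomes $\langle A, \int \tau_\nu \, d\nu \rangle_{\mathrm{HS}}$, so varying $A$ gives the identity. Since the left-hand side is independent of the choice of Stein kernel, Jensen's inequality for the Hilbert--Schmidt norm together with $\|\cdot\|_{\mathrm{op}} \leq \|\cdot\|_{\mathrm{HS}}$ gives
$$\|\operatorname{Cov}(\nu) - \mathrm{Id}\|_{\mathrm{op}} \leq \left\| \int (\tau_\nu - \mathrm{Id}) \, d\nu \right\|_{\mathrm{HS}} \leq \left( \int \|\tau_\nu - \mathrm{Id}\|_{\mathrm{HS}}^2 \, d\nu \right)^{1/2},$$
and taking the infimum over Stein kernels on the right yields $\|\operatorname{Cov}(\nu) - \mathrm{Id}\|_{\mathrm{op}} \leq S(\nu|\gamma)$, whence $\|\operatorname{Cov}(\nu)\|_{\mathrm{op}} \leq 1 + S(\nu|\gamma)$ by the triangle inequality.

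Next, I would invoke the known comparison, valid for centered log-concave $\nu$, of the form $C_p(\nu) \leq C' \|\operatorname{Cov}(\nu)\|_{\mathrm{op}}$ for some universal constant $C'$ (in dimension one this is Bobkov's classical bound $C_p \leq 12 \operatorname{Var}(\nu)$). Combining with the first step gives $C_p(\nu) \leq C'(1 + S(\nu|\gamma))$, and the elementary estimate $1 + x \leq 2(1 + x^2)$ for $x \geq 0$ upgrades this to $C_p(\nu) \leq 2 C'(1 + S(\nu|\gamma)^2)$, which is the statement with $C = 2 C'$.

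The main obstacle is the second step: the dimension-free comparison between $C_p(\nu)$ and $\|\operatorname{Cov}(\nu)\|_{\mathrm{op}}$ for log-concave measures is a substantial fact, essentially of Kannan--Lov\'asz--Simonovits type, and to extract a universal constant one must appeal to the relevant statement in \cite{LNP15} or the surrounding literature. The first step, by contrast, is a clean and universal algebraic fact about Stein kernels that does not use log-concavity and is the key observation making the comparison to $S(\nu|\gamma)$ possible.
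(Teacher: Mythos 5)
Your first step is correct: testing \eqref{eq_stein} with the linear maps $\varphi(x)=Ax$ does give $\int \tau_\nu\, d\nu = \int xx^T d\nu$, and hence $\|\mathrm{Cov}(\nu)-\id\|_{\mathrm{op}} \leq S(\nu|\gamma)$. But the second step is where the proof breaks down, and not merely for lack of a citation. The inequality $C_p(\nu) \leq C'\,\|\mathrm{Cov}(\nu)\|_{\mathrm{op}}$ with a dimension-free constant $C'$ for log-concave measures is precisely the Kannan--Lov\'asz--Simonovits conjecture, which is open; the known results in this direction (including all the progress toward KLS) carry factors that grow with the dimension $d$, so they cannot yield the numerical constant $C$ claimed in the proposition. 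The one-dimensional case you cite (Bobkov's $C_p \leq 12\,\mathrm{Var}$) is genuinely a theorem, but it is exactly the case where KLS is known; it does not extend to $\R^d$ by any currently available argument. So your route proves the proposition only in dimension one.

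The reason the paper's argument avoids this trap is that it does not reduce to \emph{linear} test functions. Milman's theorem \cite{Mil09} bounds the Poincar\'e constant of a log-concave measure by the worst variance of an arbitrary $1$-Lipschitz function, $C_p \leq C \sup_{u\ 1\text{-Lip}} \Var_\nu(u)$ --- a sup over a much larger class than the linear functionals whose variances make up $\|\mathrm{Cov}(\nu)\|_{\mathrm{op}}$ (the gap between these two quantities is exactly what KLS asserts is bounded). The second ingredient, Theorem 2.8 of \cite{LNP15}, then controls second moments of \emph{all} Lipschitz functions under $\nu$ by their Gaussian counterparts up to an error governed by $S(\nu|\gamma)$, giving $\sup_{u\ 1\text{-Lip}}\Var_\nu(u) \leq C(1+S(\nu|\gamma)^2)$. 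Your covariance identity recovers only the restriction of this moment bound to linear $u$, which is too weak to feed into Milman's theorem. To repair the proof you would need to replace your step 1 by the full Lipschitz moment bound from \cite{LNP15} (or reprove it), and replace your step 2 by Milman's variance-to-spectral-gap theorem rather than the KLS-type covariance comparison.
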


This statement, combined with our main result, tells us that for log-concave measures, controlling the Stein discrepancy and controlling the Poincar\'e constant are equivalent. At first glance, the above estimate does not capture the dimension-free nature of the Poincar\'e constant. This may be a necessary downside of such  bounds, since if we consider a measure of the form $\nu = \gamma_{d-1} \otimes \tilde{\nu}$ it behaves in the correct way, as the Poincar\'e constant is at least as bad as that of the projection along the worst direction. 

As mentioned in \cite{LNP15}, Proposition \ref{prop_rev_stein_est} is obtained by combining the moment bound of Theorem 2.8 in \cite{LNP15} and Milman's results on obtaining estimates on Poincar\'e constants of log-concave measures by the worst variance of $1$-lipschitz functions \cite{Mil09}.

\subsection{Extension to non-Gaussian reference measures}

Theorem \ref{main_thm_stein} also generalizes to Stein kernels with non-Gaussian reference measures. Such an extension is natural in the framework of the generator approach to Stein's method, where an integration by parts formula for a given measure is obtained by finding a Markov generator that leaves the considered measure invariant. This approach was pioneered in \cite{Bar90, Got91}. Stein's method for the approximation of non-Gaussian reference measures has had some successful applications in the study of convergence of Markov Chain Monte Carlo algorithms \cite{Dia04} and for generalizations of the fourth moment theorem \cite{ACP13}. The Gaussian functional inequalities of \cite{LNP15} were also extended to a class of non-Gaussian measures, using arguments from Bakry-\'Emery calculus.  

We can extend Theorem \ref{main_thm_stein} to the situation where the Gaussian measure is replaced by a general reference measure $\mu = e^{-V}dx$, where $V: \mathbb{R}^d \to \mathbb{R}$ is a smooth function. In this situation, a Stein kernel of a measure $\nu$ with respect to $\mu$ is defined by the relation
\begin{equation}
\int{\nabla V \cdot \varphi d\nu} = \int{\langle \tau_{\nu}, \nabla \varphi \rangle_{\mathrm{HS}}d\nu} \hspace{3mm} \forall \varphi \in W^{1,2}_{\nu}.
\end{equation}

Applying the same arguments as for the Gaussian case, we obtain
\begin{thm} Let $\mu = e^{-V}dx$, where $V: \mathbb{R}^d \to \mathbb{R}$ is  smooth.
Assume that $\nu$ satisfies a Poincar\'e inequality with constant $C_p$, that $\int{\nabla V d\nu} = 0$ and that $\int{|\nabla V|^2d\nu} < \infty$. Then there exists a Stein kernel for $\nu$, relative to $\mu$, of the form $\tau_{\nu} = \nabla g$ for some $g \in W^{1,2}_{\nu}$. Moreover, we have the bound
\begin{align}
\int \|\tau_{\nu}\|^2_{\mathrm{HS}}d\nu \leq C_p \int |\nabla V|^2 d\nu.
\end{align}  
\end{thm}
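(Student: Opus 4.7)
The plan is to mirror the proof of Theorem \ref{main_thm_stein} almost verbatim, with $\nabla V$ playing the role previously assumed by the coordinate function $x$. I would work in the Hilbert space $W^{1,2}_{\nu, 0}$ equipped with the bilinear form $a(f,h) := \int \langle \nabla f, \nabla h \rangle_{\mathrm{HS}} d\nu$, which is coercive there by the Poincar\'e inequality with constant $C_p$. The goal is a Lax--Milgram argument producing a unique $g \in W^{1,2}_{\nu, 0}$ satisfying $\int \langle \nabla g, \nabla f \rangle_{\mathrm{HS}} d\nu = \int \nabla V \cdot f \, d\nu$ for all $f \in W^{1,2}_{\nu, 0}$.

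The main step to verify is that the linear functional $L(f) := \int \nabla V \cdot f \, d\nu$ is continuous on $W^{1,2}_{\nu, 0}$. This is precisely where the hypothesis $\int \nabla V \, d\nu = 0$ takes over the role played before by the centering of $\nu$: it permits subtracting an arbitrary vector $\vec{c} \in \R^d$ from $f$ without altering $L(f)$, whence Cauchy--Schwarz followed by a componentwise application of the Poincar\'e inequality yields
\begin{equation*}
|L(f)| \leq \left(\int |\nabla V|^2 d\nu\right)^{1/2} \inf_{\vec{c} \in \R^d} \left(\int |f - \vec{c}|^2 d\nu\right)^{1/2} \leq \sqrt{C_p} \left(\int |\nabla V|^2 d\nu\right)^{1/2} \left(\int |\nabla f|^2 d\nu\right)^{1/2}.
\end{equation*}
The hypothesis $\int |\nabla V|^2 d\nu < \infty$ ensures that the first factor is finite.

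Once $g$ is produced, I would extend the resulting identity from $W^{1,2}_{\nu, 0}$ to all of $W^{1,2}_{\nu}$ by translation invariance: adding a constant vector to $f$ leaves $\nabla f$ unchanged while shifting $L(f)$ by $\vec{c} \cdot \int \nabla V \, d\nu = 0$. Hence $\tau_{\nu} := \nabla g$ is indeed a Stein kernel of $\nu$ with respect to $\mu$ in the sense prescribed. For the quantitative bound, I would reuse the variational argument from Theorem \ref{main_thm_stein}: $g$ minimizes $J(f) := \tfrac{1}{2}\int \|\nabla f\|_{\mathrm{HS}}^2 d\nu - L(f)$, so that $J(g) = -\tfrac{1}{2}\int \|\nabla g\|_{\mathrm{HS}}^2 d\nu$, and inserting the continuity estimate for $L(g)$ produces a self-improving inequality yielding $\int \|\nabla g\|^2_{\mathrm{HS}} d\nu \leq C_p \int |\nabla V|^2 d\nu$. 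No genuine obstacle is anticipated; the argument is structurally identical to the Gaussian case, with the centering hypothesis transferred from $\nu$ to $\nabla V$, and the only mild technical point is the componentwise use of Poincar\'e for vector-valued test functions.
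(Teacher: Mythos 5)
Your proposal is correct and follows exactly the route the paper intends: the paper gives no separate proof for this theorem, stating only that one applies the same Lax--Milgram and variational arguments as in Theorem \ref{main_thm_stein} with $\nabla V$ replacing the coordinate function $x$, which is precisely what you carry out (including the correct observation that the hypothesis $\int \nabla V\, d\nu = 0$ plays the role formerly played by the centering of $\nu$, both for continuity of the linear form and for extending the identity from $W^{1,2}_{\nu,0}$ to $W^{1,2}_{\nu}$).
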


Note that for polynomial potentials $V$, the finiteness of $\int{|\nabla V|^2d\nu}$ automatically follows from the Poincar\'e inequality. As in the previous section, this result can easily be generalized to cover measures satisfying a converse weighted Poincar\'e inequality. 

\section{General  bounds on the Stein discrepancy}
\subsection{Stein discrepancy controls skewness}
Above, it was shown that in presence of a suitable Poincar\'e inequality, the Stein discrepancy is controlled from above by second moments.   Here, we  establish a complementary lower bound on the Stein discrepancy in terms of skewness:

\begin{thm} \label{thm_stein_lb}
Let $X = (X_1, X_2, \dots, X_d)$ have law $\nu$.  If $\nu$ is  isotropic  with finite fourth moment, then
$$S(\nu|\gamma)^2 \geq \frac{1}{9}\underset{i=1}{\stackrel{d}{\sum}}  |\mathbb{E}[X_i^3]|^2.$$
\end{thm}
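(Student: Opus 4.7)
The plan is to evaluate the Stein relation \eqref{eq_stein} on a coordinate test function that extracts the marginal third moment $\E[X_k^3]$ on the left while pairing only with the diagonal entry $(\tau_\nu)_{kk} - 1$ on the right. A one-line Cauchy--Schwarz then lower bounds the integrated square of this diagonal discrepancy by $|\E[X_k^3]|^2$, after which summing in $k$ and taking the infimum over Stein kernels delivers the claim.

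Concretely, for each $k \in \{1,\ldots,d\}$ I would take the test field $\varphi^{(k)}(x) := x_k^2\, e_k$, where $e_k$ is the $k$-th standard basis vector in $\R^d$. The finite fourth moment hypothesis together with isotropy gives $\int |\varphi^{(k)}|^2 d\nu = \E[X_k^4] < \infty$ and $\int \|\nabla \varphi^{(k)}\|_{\mathrm{HS}}^2 d\nu = 4\,\E[X_k^2] = 4$, so $\varphi^{(k)}$ lies in $W^{1,2}_\nu$ after a routine truncation-and-mollification approximation. A direct computation gives $x \cdot \varphi^{(k)}(x) = x_k^3$ and $(\nabla \varphi^{(k)})_{ij} = 2 x_k\, \delta_{ik} \delta_{jk}$, so \eqref{eq_stein} collapses to $\E[X_k^3] = 2\,\E[X_k (\tau_\nu)_{kk}]$. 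Since $\nu$ is centered this is the same as $\E[X_k^3] = 2\,\E[X_k ((\tau_\nu)_{kk} - 1)]$, and Cauchy--Schwarz combined with $\E[X_k^2] = 1$ yields $|\E[X_k^3]|^2 \le 4\, \E[((\tau_\nu)_{kk} - 1)^2]$.

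Summing over $k$ and using the pointwise inequality $\sum_k ((\tau_\nu)_{kk} - 1)^2 \le \|\tau_\nu - \mathrm{Id}\|_{\mathrm{HS}}^2$ gives $\sum_{k=1}^d |\E[X_k^3]|^2 \le 4 \int \|\tau_\nu - \mathrm{Id}\|_{\mathrm{HS}}^2 d\nu$ for every Stein kernel $\tau_\nu$; taking the infimum over all such kernels produces $\sum_k |\E[X_k^3]|^2 \le 4\, S(\nu|\gamma)^2$. This is in fact strictly stronger than the stated bound (with $4$ in place of $9$), so the theorem follows a fortiori; the looser constant $1/9$ perhaps reflects a more symmetric presentation in the paper's proof.

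There is no real conceptual obstacle in this strategy. The one point that requires a bit of care is the admissibility of $\varphi^{(k)}$ in $W^{1,2}_\nu$, which I would handle by approximating $x_k^2\, e_k$ by smooth compactly supported maps obtained through multiplication by a cutoff $\eta_R$ followed by standard mollification, and then invoking dominated convergence on the $L^2(\nu)$ and Sobolev norms using the finite fourth moment hypothesis to pass to the limit in \eqref{eq_stein}.
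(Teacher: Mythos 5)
Your argument is correct, and it takes a genuinely different — and more elementary — route than the paper. You test the defining identity \eqref{eq_stein} directly on $\varphi^{(k)}(x)=x_k^2 e_k$ to get $\mathbb{E}[X_k^3]=2\,\mathbb{E}[X_k((\tau_\nu)_{kk}-1)]$ (using $\mathbb{E}[X_k]=0$, which indeed is forced by the existence of a Stein kernel, and $\mathbb{E}[X_k^2]=1$), and then Cauchy--Schwarz plus $\sum_k((\tau_\nu)_{kk}-1)^2\le\|\tau_\nu-\id\|_{\mathrm{HS}}^2$ gives $S(\nu|\gamma)^2\ge\frac14\sum_k|\mathbb{E}[X_k^3]|^2$, which is strictly stronger than the stated bound; your admissibility discussion for $\varphi^{(k)}$ (finite Sobolev norm via the fourth moment, cutoff and mollification) is exactly what is needed. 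The paper instead first reduces to $d=1$ by conditioning on coordinates, then invokes the decay $S(\nu|\gamma)^2\ge n\,S(\nu_n|\gamma)^2$ along the CLT (its Theorem on convolutions), the inequality $S\ge W_2$ from Ledoux--Nourdin--Peccati, and Rio's asymptotic $\sqrt{n}\,W_2(\nu_n,\gamma)\to\frac13|\mathbb{E}[X^3]|$ — which is where the constant $\frac19=(\frac13)^2$ comes from. The paper's route illustrates the interplay with the CLT machinery it develops, but your direct computation buys a better constant; in fact $\frac14$ is sharp, as one checks on the centered exponential $X=E-1$, $E\sim\mathrm{Exp}(1)$, whose unique kernel is $\tau(x)=x+1$, giving $\int(\tau-1)^2d\nu=1=\frac14|\mathbb{E}[X^3]|^2$.
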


\begin{proof}   
First, we shall reduce the problem to the one-dimensional case. Let $\tau$ be a Stein factor for $\nu$. Then $\tau^i(x) := \mathbb{E}[\tau_{ii}(X)|X_i = x]$ is a Stein kernel for $X_i$. Moreover, we have 
\begin{align*}
S(\nu|\gamma)^2 &= \int{ \sum_{1\leq i,j\leq d}  |\tau_{i,j} - \delta_{i,j}|^2d\nu} \\
&\geq \int{ {\sum_{i=1}^d}|\tau_{i,i} - 1|^2d\nu} \geq \sum_{i=1}^d \mathbb{E}[|\tau^i(X_i) - 1|^2] \geq \sum_{i=1}^d S(\nu_i | \gamma_i)^2,
\end{align*}
where $\nu_i$ is the law of $X_i$ and $\gamma_i$ is the $i$th marginal of $\gamma$.
Hence it is enough to prove the theorem when $d=1$. 

Now let $X$ be a real-valued random variable with mean zero and unit variance, and let $\nu_n$ be the law of the standardized sum $\frac{1}{\sqrt{n}}\underset{i=1}{\stackrel{n}{\sum}} \hspace{1mm} X_i$, where the $X_i$'s are independent copies of $X$. Then 
\begin{align}
S(\nu|\gamma)^2 \geq nS(\nu_n|\gamma)^2 \label{CLTdecay}
\end{align}
 for any $n \geq 1$ (see for example Section 2.5 in \cite{LNP15} and Theorem \ref{thm:SKconvolution} in the next section). Moreover, it was established in \cite{LNP15} that the Stein discrepancy is always larger than the Wasserstein distance $W_2$ to the standard Gaussian measure. Hence $S(\nu_n|\gamma)^2 \geq W_2(\nu_n, \gamma)^2$ for all $n\geq1$. Finally, Rio established in \cite{Rio11} that under our assumptions, $\sqrt{n} W_2(\nu_n, \gamma) \longrightarrow \frac{1}{3}|\mathbb{E}[X^3]|$, which concludes the proof. 
\end{proof}

\subsection{Strict Monotonicity of the Stein Discrepancy in the CLT}


Monotonicity of information measures along the CLT have a long history, going back to Shannon's conjecture on the monotonicity of the entropy, which was eventually resolved in \cite{ABBN04b}. More specifically, if $S_n = \tfrac{1}{\sqrt{n}}\sum_{i=1}^n X_i$, where $X_1, \dots,X_n$ are i.i.d.~isotropic random vectors, then both the  entropy and Fisher information of $S_n$ with  respect to the standard Gaussian measure are non-increasing in $n$.  Following Artstein, Ball, Barthe and Naor's proof of this fact, several generalizations and alternative proofs have been discovered \cite{tulino2006monotonic, Shlyakhtenko, madiman2006, madiman2007generalized, madiman2009, Cou16b}.  

Since Stein discrepancy relates to both Fisher information and entropy in various ways \cite{LNP15}, it is natural to conjecture that it also is non-increasing along the CLT.  It turns out that this is indeed the case and, in fact, it is strictly decreasing.  The following generalizes \eqref{CLTdecay} along these lines:

\begin{thm}\label{thm:SKconvolution}
Let $\nu$ be an isotropic probability measure on $\mathbb{R}^d$, and let $\nu_n$ denote the law of $S_n = \tfrac{1}{\sqrt{n}}\sum_{i=1}^n X_i$, where $X_1, \dots,X_n$ are i.i.d.~with law $\nu$.  Then
 $$S(\nu_n|\gamma)^2 \leq  {\frac{m}{n}}S(\nu_m|\gamma)^2 ~~~~~~~~1\leq m\leq n.$$
\end{thm}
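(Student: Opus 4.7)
The plan is to construct an explicit Stein kernel for $\nu_n$ out of a Stein kernel for $\nu_m$ via symmetric averaging over size-$m$ subsets, and to bound its discrepancy via a Hoeffding/ANOVA decomposition. Fix a Stein kernel $\tau_m$ for $\nu_m$, and for each $A\subseteq\{1,\dots,n\}$ with $|A|=m$ set $\tilde S_A:=\frac{1}{\sqrt{m}}\sum_{i\in A}X_i\sim\nu_m$, so that $S_n=\sqrt{m/n}\,\tilde S_A+\sqrt{(n-m)/n}\,\tilde S_{A^c}$. The first key step is to show that
$$\hat\tau(z):=\mathbb{E}\bigl[\tau_m(\tilde S_A)\bigm|S_n=z\bigr]$$
(independent of the choice of $A$ by exchangeability of the $X_i$'s) is a valid Stein kernel for $\nu_n$. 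One applies Stein's identity for $\nu_m$ to test functions of the form $s\mapsto\varphi(\sqrt{m/n}\,s+\sqrt{(n-m)/n}\,\tilde S_{A^c})$, conditions on $\tilde S_{A^c}$ to obtain $\mathbb{E}[\tilde S_A\cdot\varphi(S_n)]=\sqrt{m/n}\,\mathbb{E}\langle\tau_m(\tilde S_A),\nabla\varphi(S_n)\rangle_{\mathrm{HS}}$, sums over $|A|=m$, and invokes $\sum_{|A|=m}\tilde S_A=\binom{n-1}{m-1}\sqrt{n/m}\,S_n$ together with $\binom{n-1}{m-1}=(m/n)\binom{n}{m}$ to recover Stein's identity for $\nu_n$ with kernel $\hat\tau$.

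Because $\nu_m$ is isotropic, $\mathbb{E}[\tau_m(\tilde S_A)]=I$, so $W_A:=\tau_m(\tilde S_A)-I$ is mean-zero; Jensen's inequality then gives $S(\nu_n|\gamma)^2\le\mathbb{E}\|\bar W\|_{\mathrm{HS}}^2$ with $\bar W:=\frac{1}{\binom{n}{m}}\sum_{|A|=m}W_A$. Next I would apply the Hoeffding decomposition to the symmetric, square-integrable function $W_A$ of $(X_i)_{i\in A}$, obtaining $W_A=\sum_{\emptyset\neq T\subseteq A}g_T$, where each $g_T$ depends only on $(X_i)_{i\in T}$, the $g_T$'s are mutually orthogonal in $L^2$ (with respect to the Hilbert-Schmidt inner product), and by symmetry $t_k:=\mathbb{E}\|g_T\|_{\mathrm{HS}}^2$ depends only on $|T|=k$. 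In particular $\mathbb{E}\|\tau_m-I\|_{\mathrm{HS}}^2=\sum_{k=1}^m\binom{m}{k}t_k$. Swapping the order of summation, $\sum_{|A|=m}W_A=\sum_T\binom{n-|T|}{m-|T|}g_T$, and orthogonality combined with the identity $\binom{n}{k}\binom{n-k}{m-k}=\binom{n}{m}\binom{m}{k}$ give
$$\mathbb{E}\|\bar W\|_{\mathrm{HS}}^2=\frac{1}{\binom{n}{m}}\sum_{k=1}^m\binom{m}{k}\binom{n-k}{m-k}\,t_k.$$

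The proof concludes with the elementary combinatorial bound
$$\frac{\binom{n-k}{m-k}}{\binom{n}{m}}=\prod_{j=0}^{k-1}\frac{m-j}{n-j}\le\frac{m}{n},\qquad 1\le k\le m\le n,$$
which holds since each factor with $j\ge1$ is at most one while the $j=0$ factor equals $m/n$. Combining yields $\mathbb{E}\|\bar W\|_{\mathrm{HS}}^2\le(m/n)\,\mathbb{E}\|\tau_m-I\|_{\mathrm{HS}}^2$, and taking the infimum over Stein kernels $\tau_m$ of $\nu_m$ gives $S(\nu_n|\gamma)^2\le(m/n)\,S(\nu_m|\gamma)^2$. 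The most delicate step is the construction of $\hat\tau$ in the first paragraph: the symmetric average over subsets collapses to a single conditional expectation by exchangeability, and the combinatorial identity $\binom{n-1}{m-1}/\binom{n}{m}=m/n$ exactly balances the $\sqrt{m/n}$ factor coming from Stein's identity. Everything downstream is bookkeeping of the Hoeffding decomposition in the matrix-valued setting.
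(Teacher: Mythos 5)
Your proof is correct, and while the first half coincides with the paper's argument, the second half takes a genuinely different route. The Stein kernel you build for $\nu_n$ is exactly the paper's: by exchangeability your $\hat\tau(z)=\E[\tau_m(\tilde S_A)\mid S_n=z]$ is the same object as the paper's $\tau_n(s_n)=\E[\tau_m(S_m)\mid S_n=s_n]$, and your verification (apply Stein's identity for $\nu_m$ conditionally on $\tilde S_{A^c}$, then sum over $A$ and use $\binom{n-1}{m-1}=\tfrac{m}{n}\binom{n}{m}$) is the subset-averaged version of the paper's computation, which instead uses the single identity $\E\langle S_n,\varphi(S_n)\rangle=\sqrt{n/m}\,\E\langle S_m,\varphi(S_n)\rangle$. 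Where you diverge is the contraction step. The paper invokes, as a black box, the Dembo--Kagan--Shepp maximal correlation inequality $\E|\E[\vartheta(S_m)\mid S_n]|^2\leq \tfrac{m}{n}\E|\vartheta(S_m)|^2$ for mean-zero $\vartheta$. You instead prove the needed bound from scratch: symmetrize over the $\binom{n}{m}$ subsets (free, since the conditional expectations agree), apply Jensen to discard the conditioning, and then run a Hoeffding/ANOVA decomposition of $\tau_m(\tilde S_A)-\id$ as a symmetric function of $(X_i)_{i\in A}$, finishing with the elementary bound $\binom{n-k}{m-k}/\binom{n}{m}=\prod_{j=0}^{k-1}\tfrac{m-j}{n-j}\leq \tfrac{m}{n}$ for $k\geq 1$; I checked the orthogonality bookkeeping and the identity $\binom{n}{k}\binom{n-k}{m-k}=\binom{n}{m}\binom{m}{k}$, and they are right. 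This is precisely the ``variance drop'' mechanism underlying the monotonicity-of-entropy proofs (Artstein--Ball--Barthe--Naor, Madiman--Barron, and \cite{Cou16b}), so in effect you have reproved the special case of \cite{dembo2001remarks} that the paper cites. What your route buys is self-containedness and a slightly sharper intermediate statement (the exact coefficient $\binom{m}{k}\binom{n-k}{m-k}/\binom{n}{m}$ for each Hoeffding level $k$, which degrades faster than $m/n$ for $k\geq 2$); what the paper's route buys is brevity. The only points worth making explicit in a write-up are that linear test functions are admissible (so isotropy of $\nu_m$ gives $\E[\tau_m]=\id$, which both proofs need) and that $\varphi(\sqrt{m/n}\,\cdot+c)$ lies in $W^{1,2}_{\nu_m}$ for a.e.\ $c$ by Fubini--Tonelli; the paper elides these as well.
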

 \begin{proof}
For $m\geq 1$, let $\tau_m$ denote a Stein kernel associated with $S_m$. 
 We may assume that such a $\tau_m$ exists, since if it does not, the claim is vacuous. We shall first show that for all $n\geq m$, the function
\begin{align}
\tau_n(s_n) =  \E[\tau_m(S_m) | S_n=s_n]\label{TmTnIdentity}
\end{align}
is a valid Stein kernel for $S_n$. Indeed, for  $s_n = \sqrt{\tfrac{m}{n}}s_m +  \tilde{s}$, any smooth function $\varphi$ evaluated on $s_n$ may also be considered as a smooth function of $s_m$ for each fixed $\tilde{s}$.   In particular, the chain rule directly yields
\begin{align}
\nabla_{s_m} \varphi(s_n) = \sqrt{\frac{m}{n}} \nabla_{s_n}\varphi(s_n).\notag
\end{align}
Therefore, starting with linearity of expectation and defining   $\tilde{S}:= S_n -\sqrt{\tfrac{m}{n}}S_m$, we may write
\begin{align*}
\E \langle S_n ,   \varphi(S_n) \rangle   &= \sqrt{\frac{n}{m}} \E \langle S_m ,   \varphi(S_n) \rangle\\
&=  \sqrt{\frac{n}{m}}  \E[  \E [\langle S_m ,  \varphi(S_n) \rangle | \tilde{S} ] ]\\
&=  \sqrt{\frac{n}{m}}  \E[  \E [\langle \tau_m(S_m) ,  \nabla_{s_m} \varphi(S_n) \rangle_{\mathrm{HS}} | \tilde{S} ] ]\\
&=     \E [ \langle \tau_m(S_m) , \nabla_{s_n} \varphi(S_n) \rangle_{\mathrm{HS}}    ]\\
&=       \E  \langle \E[\tau_m(S_m)|S_n] , \nabla \varphi(S_n) \rangle_{\mathrm{HS}}  ,
\end{align*}
establishing \eqref{TmTnIdentity} is a valid Stein kernel.

Following \cite{Cou16b},  if a function $\vartheta:\mathbb{R}^d\to \mathbb{R}$ satisfies $\E \vartheta(S_m)=0$, then  
\begin{align}
& \E [ \left|\E[\vartheta(S_m)|S_n] \right|^2  ]   \leq  \frac{m}{n}  \E[ \left| \vartheta(S_m)\right|^2  ]   &1\leq m\leq n. \label{maxCorr}
\end{align}
This inequality is due to Dembo, Kagan and Shepp \cite{dembo2001remarks}; see also Kamath and Nair \cite{kamath2015strong}.   Now, $\E[\tau_m(S_m)] = \id$, so a direct application of \eqref{maxCorr} to the coordinates of $\tau_m - \id$ yields
$$\E\| \tau_n(S_n) - \id \|_{\mathrm{HS}}^2 = \E[ \| \E[\tau_m(S_m) - \id | S_n] \|_{\mathrm{HS}}^2 ] \leq \frac{m}{n} \E\| \tau_m(S_m) - \id \|_{\mathrm{HS}}^2.$$
Taking the infimum over all valid Stein kernels $\tau_m, \tau_n$ finishes the proof.

\end{proof}

\begin{rque}
The same result holds if \eqref{SKotherDef} is adopted as the definition of a Stein kernel.
\end{rque}

\section{Applications}\label{sec:Appl}

\subsection{Quantitative central limit theorems}

We shall now discuss some applications of the bounds to quantitative central limit theorems in Wasserstein distance $W_2$, which is defined as
$$W_2(\mu, \nu)^2 := \underset{\pi}{\inf} \int{|x-y|^2d\pi(x,y)},$$
where the infimum is taken over all couplings $\pi$ of the probability measures $\mu$ and $\nu$. We refer the reader to the textbook \cite{Vill03} for more information about Wasserstein distances and optimal transport. 

\begin{thm} \label{thm_clt_quant_w2}
Let $X_1,..X_n$ be independent centered, isotropic random variables. Assume that the law of $X_i$ satisfies a Poincar\'e inequality with constant $C_i$, and let $\nu_n$ be the law of $\frac{1}{\sqrt{n}}\underset{i=1}{\stackrel{n}{\sum}} \hspace{1mm} X_i$. Then 
$$W_2(\nu_n, \gamma)^2 \leq \frac{d}{n^2}\underset{i= 1}{\stackrel{n}{\sum}} (C_i - 1).$$
In particular, if the $X_i$ are i.i.d., and their law $\nu$ is centered, isotropic and satisfies a Poincar\'e inequality with constant $C_p$, then 
$$W_2(\nu_n, \gamma)^2 \leq {\frac{d(C_p-1)}{n}}.$$
\end{thm}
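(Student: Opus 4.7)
The plan is to bound $W_2(\nu_n,\gamma)$ through the Stein discrepancy, using the fact recalled in the proof of Theorem \ref{thm_stein_lb} that $W_2(\mu,\gamma)^2 \leq S(\mu|\gamma)^2$ whenever a Stein kernel exists \cite{LNP15}. The core step will then be a tensorisation estimate for independent summands,
\[
S(\nu_n|\gamma)^2 \leq \frac{1}{n^2}\sum_{i=1}^n S(\nu_i|\gamma)^2,
\]
where $\nu_i$ denotes the law of $X_i$. Combined with Corollary \ref{cor_main_thm} applied to each $\nu_i$ (which is centered and isotropic with Poincar\'e constant $C_i$), this yields $S(\nu_n|\gamma)^2 \leq \frac{d}{n^2}\sum_i(C_i-1)$, and the theorem follows at once. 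The i.i.d.\ corollary is then immediate; alternatively, it is already contained in Theorem \ref{thm:SKconvolution}.

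For the tensorisation estimate, I would let $\tau_i$ be a Stein kernel for $X_i$ (which exists by Theorem \ref{main_thm_stein}) and exhibit the natural candidate
\[
\tau_{S_n}(s) := \frac{1}{n}\sum_{i=1}^n \E[\tau_i(X_i)\mid S_n = s]
\]
as a Stein kernel for $S_n$. Its validity should follow from the same chain-rule trick used in the proof of Theorem \ref{thm:SKconvolution}: conditioning on $X_{-i}:=(X_j)_{j\neq i}$ and using $\nabla_{X_i}\varphi(S_n) = \tfrac{1}{\sqrt{n}}\nabla\varphi(S_n)$ together with the Stein relation for $\tau_i$ gives $\E\langle X_i,\varphi(S_n)\rangle = \tfrac{1}{\sqrt{n}}\E\langle \tau_i(X_i),\nabla\varphi(S_n)\rangle_{\mathrm{HS}}$; summing in $i$ and dividing by $\sqrt{n}$ then yields the Stein relation for $\tau_{S_n}$.

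It remains to bound $\E\|\tau_{S_n}(S_n)-\id\|_{\mathrm{HS}}^2$. Setting $A_i := \tau_i(X_i) - \id$, testing the Stein relation for $\nu_i$ against linear functions shows $\E\tau_i(X_i) = \E[X_i X_i^T] = \id$ by isotropy, so $\E A_i = 0$; and the $A_i$ are independent since the $X_i$ are. Applying conditional Jensen to $\sum_i A_i$ then yields
\[
\E\left\|\E\left[\sum_{i=1}^n A_i \,\Big|\, S_n\right]\right\|_{\mathrm{HS}}^2 \leq \E\left\|\sum_{i=1}^n A_i\right\|_{\mathrm{HS}}^2 = \sum_{i=1}^n \E\|A_i\|_{\mathrm{HS}}^2,
\]
where the last equality uses independence and centering of the $A_i$ to kill all cross terms. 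Dividing by $n^2$ and selecting each $\tau_i$ to (nearly) achieve $S(\nu_i|\gamma)^2$ completes the tensorisation estimate. The main subtlety to watch is the $1/n^2$ (rather than $1/n$) scaling: without both independence and the isotropy-induced centering $\E A_i = 0$, the cross terms would not cancel and one would recover only the far weaker rate $\frac{1}{n}\sum_i(C_i-1)$.
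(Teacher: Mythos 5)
Your proposal is correct and follows essentially the same route as the paper's proof: the same averaged conditional-expectation kernel $\tau_{S_n}(s)=\tfrac{1}{n}\sum_i \E[\tau_i(X_i)\mid S_n=s]$ validated by the chain-rule argument of Theorem \ref{thm:SKconvolution}, the same $L^2$-projection (conditional Jensen) step, and the same cancellation of cross terms via independence and $\E\,\tau_i(X_i)=\id$ (which the paper leaves implicit but you justify correctly), finished off with the bound of Theorem \ref{main_thm_stein} on each $\E\|\tau_i(X_i)-\id\|_{\mathrm{HS}}^2$. No gaps to report.
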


\noindent We remark that the rate $W_2(\nu_n, \gamma) = O(1/\sqrt{n})$ in the CLT for i.i.d. random variables is known to be optimal in general. Moreover, the dependence on the dimension is sharp, since it cannot be improved for product measures. To our knowledge, this seems to be the first result with sharp dependence on both the dimension and on $n$ for $W_2$ and with assumptions satisfied by a large class of probability measures. A similar result can be obtained with converse weighted Poincar\'e inequalities, with the same sharp rate but a less explicit prefactor. 

In the i.i.d. case, there are several similar results already present in the literature. In dimension one, a  more general result has been obtained by Rio in \cite{Rio09, Rio11}, where a finite fourth moment suffices. He also obtained convergence in stronger transport distances when the random variable has a finite exponential moment, which is a weaker assumption than our use of a Poincar\'e inequality. The proofs rely on an explicit representation of transport maps involving the repartition function of $\nu$, which is unavailable in higher dimensions. Subsequently,  Bobkov \cite{Bob13}  combined optimal rates in the entropic CLT \cite{BCG14} with Talagrand's inequality to conclude $O(1/\sqrt{n})$ convergence of $W_2(\nu_n, \gamma)$ in dimension one, but left open the problem in higher dimensions.

In the multidimensional setting, Zhai \cite{Zha16} has recently established  that for  random variables in dimension $d$, we have $W_2(\nu_n, \gamma) \leq \frac{5\sqrt{d}\beta(1 + \log n)}{\sqrt{n}}$, under the boundedness assumption  $|X|\leq \beta$ almost surely. His assumptions are not directly comparable with ours, since bounded random variables do not necessarily satisfy a Poincar\'e inequality, while there are many examples of unbounded random variables that do satisfy one.  However, it is true that every bounded random variable regularized via convolution with a Gaussian measure of arbitrarily small variance does satisfy a Poincar\'e inequality \cite{Bardet16},  which suggests that Zhai's result may potentially be improved to have optimal dependence on both dimension and $n$.  Unfortunately, the bounds on the Poincar\'e constant obtained in \cite{Bardet16} are exponential in $\beta$, so it is not clear whether Zhai's result may  be recovered from our own via this route.  In situations where both estimates apply, the bound in the present work will typically be smaller. For example, for high-dimensional product measures, the Poincar\'e constant is independent of the dimension, while $\beta$ would be of order of $\sqrt{d}$. Moreover, we eliminate the extra $\log n$ factor. It may be relevant to point out that both our assumptions and those of \cite{Zha16} fit in the framework of random variables with a finite exponential moment. 

Also in  higher dimensions, Bonis showed in \cite{Bon16} that, under the moment constraint $\mathbb{E} \|X\|^{2+m}<\infty$ for $m\in [0,2]$,  we have the asymptotic rate  $W_2(\nu_n, \gamma) = O(n^{-1/2 + (2-m)/4})$.   However, in the case $m=2$, the  prefactor (which does not appear explicitly in Bonis' work) seems to have a suboptimal dependence on the dimension $d$ \cite{Bon16p}. 

In dimension 1, and for log-concave measures in higher dimension, the works \cite{BN12, BBN03, JB08} can be used to obtain a sharp rate of convergence in relative entropy when a Poincar\'e inequality holds, which implies convergence in $W_2$. These results however would rely on a bound on the relative entropy of $\nu$, which in general would give a worse prefactor in the bound. 


\begin{proof}[Proof of Theorem \ref{thm_clt_quant_w2}]
The proof hinges on the fact that 
$$\bar{\tau}_n(x) := \mathbb{E}\left[\frac{1}{n}\underset{i=1}{\stackrel{n}{\sum}} \hspace{1mm} \tau_i(X_i) \left.\right| \frac{1}{\sqrt{n}}\underset{i=1}{\stackrel{n}{\sum}} \hspace{1mm} X_i = x \right]$$
is a Stein kernel for the law of $\frac{1}{\sqrt{n}}\underset{i=1}{\stackrel{n}{\sum}} \hspace{1mm} X_i$, where $\tau_i$ is a Stein kernel for the law of $X_i$. In the i.i.d. case, we already proved this fact in the proof of Theorem \ref{thm:SKconvolution}, and the proof in the non-identicaly distributed case is exactly the same. As a consequence, using the fact that conditional expectation is an $L^2$-projection,  
\begin{align*}
W_2(\nu_n, \gamma)^2 &\leq S(\nu_n)^2 \leq \int{|\bar{\tau}_n - \operatorname{Id}|^2d\nu_n} \\
&\leq \frac{1}{n^2}\underset{i, j = 1}{\stackrel{n}{\sum}} \hspace{1mm} \int{\langle \tau_i(x_i) - \operatorname{Id}, \tau_j(x_j) - \operatorname{Id} \rangle d\nu_i(x_i)d\nu_j(x_j)} \\
&= \frac{1}{n^2}\underset{i= 1}{\stackrel{n}{\sum}} \hspace{1mm} \int{|\tau_i(x_i) - \operatorname{Id}|^2d\nu_i(x_i)} \\
&\leq \frac{d}{n^2}\underset{i= 1}{\stackrel{n}{\sum}} (C_i - 1)
\end{align*}
where the last bound is obtained by applying Theorem \ref{main_thm_stein}. This concludes the proof. 
\end{proof}

It is also possible to extend the method to \emph{weakly} dependent random variables, using a standard splitting trick: 

\begin{thm}
Let $(X_i)$ be a sequence of centered isotropic random variables, and assume that there exists a $k$ such that as soon as $|i-j| > k$ then $X_i$ and $X_j$ are independent. Assume moreover that $\mathbb{E}[X_i \cdot X_j] = 0$ for all distinct $i,j$, and that the law of each random variable satisfies a Poincar\'e inequality with uniform constant $C_p$. Then
$$W_2(\nu_n, \gamma)^2 \leq \frac{d(C_p-1) + 2d(k-1)}{\lfloor n/k \rfloor}$$
where $\nu_n$ is the law of $S_n := \frac{1}{\sqrt{n}}\underset{i = 1}{\stackrel{n}{\sum}} \hspace{1mm} X_{i}$. 
\end{thm}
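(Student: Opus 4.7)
The plan is to adapt the argument of Theorem~\ref{thm_clt_quant_w2} using the standard splitting trick, grouping the $k$-dependent sequence into blocks of size $k$ to produce a $1$-dependent sequence of block sums. Set $m = \lfloor n/k\rfloor$ and partition the first $mk$ indices into consecutive blocks $B_j = \{(j-1)k+1,\ldots,jk\}$, with rescaled block sums $Y_j := \tfrac{1}{\sqrt{k}}\sum_{i\in B_j} X_i$, so that $S_n = \tfrac{1}{\sqrt{m}}\sum_{j=1}^m Y_j$ up to a remainder of at most $k-1$ leftover indices. The key structural consequence of the $k$-dependence hypothesis is that $Y_j$ and $Y_{j'}$ are independent whenever $|j-j'|\geq 2$: the minimum index gap between such blocks is $k+1>k$. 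Thus long-range dependence is reduced to adjacency of blocks.

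Next, I would build a candidate Stein kernel for $\nu_n$ as in the proof of Theorem~\ref{thm_clt_quant_w2} by averaging individual Stein kernels. Each $X_i$ admits a Stein kernel $\tau_i$ by Theorem~\ref{main_thm_stein}, satisfying $\mathbb{E}\|\tau_i(X_i) - \mathrm{Id}\|_{\mathrm{HS}}^2 \leq d(C_p-1)$, and I set
$$\bar\tau(s) := \mathbb{E}\left[\frac{1}{n}\sum_{i=1}^n \tau_i(X_i) \,\Big|\, S_n = s\right].$$
Verifying that $\bar\tau$ is a Stein kernel in the dependent setting is the most delicate step. The i.i.d.~argument requires that, for each $i$ and test function $\varphi$, one can integrate by parts on the $X_i$ variable with $\varphi(S_n)$ treated as a function of $X_i$ alone, which uses full independence. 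Here I would instead condition on $\mathcal{F}_i := \sigma(X_j : |j-i|>k)$, under which $X_i$ retains its original marginal, apply the Stein identity for $\nu_i$ to the effective function $\tilde\varphi_i(X_i) := \mathbb{E}[\varphi(S_n) \mid \mathcal{F}_i, X_i]$, and then handle the correction coming from the $X_i$-dependence of the conditional distribution of the near-neighbors $\{X_j : 0 < |j-i| \leq k\}$. The zero-covariance assumption $\mathbb{E}[X_i\cdot X_j] = 0$ should be crucial in cancelling the leading-order part of this correction.

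Once $\bar\tau$ is established as a Stein kernel, since conditional expectation is an $L^2$-projection,
$$S(\nu_n|\gamma)^2 \leq \frac{1}{n^2}\sum_{i,j}\mathbb{E}\langle \tau_i(X_i) - \mathrm{Id},\, \tau_j(X_j)-\mathrm{Id}\rangle_{\mathrm{HS}}.$$
Pairs with $|i-j|>k$ correspond to independent $X_i, X_j$; combined with the identity $\mathbb{E}[\tau_i] = \mathrm{Id}$---which follows from isotropy by testing \eqref{eq_stein} against the vector field $\varphi(x) = x_l e_m$---these terms vanish. Diagonal terms contribute $n\cdot d(C_p-1)$, and the near-diagonal terms with $0<|i-j|\leq k$ are controlled by Cauchy-Schwarz; a careful block-wise accounting of these terms should yield the $2d(k-1)$ correction. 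Dividing by $n^2$, rewriting in terms of $m = \lfloor n/k\rfloor$, and invoking $W_2(\nu_n,\gamma)^2 \leq S(\nu_n|\gamma)^2$ then gives the claimed bound. I expect the main obstacle to be the verification in the second paragraph: the layered conditioning must be shown not to introduce spurious error terms despite the non-trivial coupling between $X_i$ and its $k$-neighborhood, and a secondary difficulty lies in extracting the precise constants $d(C_p-1)$ and $2d(k-1)$ from the block grouping in the final step.
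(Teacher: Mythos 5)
There is a genuine gap at the heart of your argument, namely the claim that $\bar\tau(s) = \mathbb{E}[\tfrac1n\sum_i \tau_i(X_i)\mid S_n=s]$ is a Stein kernel for $\nu_n$. In the independent case this works because, conditioning on $(X_j)_{j\neq i}$, the map $x\mapsto \varphi\bigl(\tfrac{1}{\sqrt n}(x+\sum_{j\neq i}X_j)\bigr)$ is a deterministic smooth function of $x$ alone, so the Stein identity \eqref{eq_stein} for $\nu_i$ applies verbatim. In your dependent setting, conditioning on $\mathcal{F}_i=\sigma(X_j: |j-i|>k)$ forces you to integrate out the $k$-neighborhood of $X_i$, and the resulting effective test function $\tilde\varphi_i(x)=\mathbb{E}[\varphi(S_n)\mid \mathcal{F}_i, X_i=x]$ has a gradient containing an extra term coming from differentiating the conditional density of $\{X_j: 0<|j-i|\le k\}$ given $X_i=x$. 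That term is a functional of the full joint law of the neighborhood, and there is no mechanism by which the hypothesis $\mathbb{E}[X_i\cdot X_j]=0$ cancels it: zero covariance controls second moments of linear statistics, not derivatives of conditional densities. You flag this step as delicate, but the proof as proposed does not close it, and without it the entire third paragraph (the $L^2$-projection bound and the term-by-term accounting) has nothing to stand on. Relatedly, your "careful block-wise accounting" is not carried out; the natural Cauchy--Schwarz bound on the $O(nk)$ near-diagonal terms produces a constant of order $k(C_p-1)d$ rather than the stated $d(C_p-1)+2d(k-1)$ numerator, so even the bookkeeping does not obviously match the claim.

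The paper's proof sidesteps all of this by never constructing a Stein kernel for the dependent sum. It splits $\{X_i\}$ not into consecutive blocks but into $k$ interleaved subsequences with index gaps exceeding $k$, so that each subsum $S^j_m$ consists of \emph{mutually independent} variables; Theorem \ref{thm_clt_quant_w2} then applies directly to each $S^j_m$, and the bounds are recombined at the level of $W_2$ (using convexity under the mixing $S_{km}=\tfrac{1}{\sqrt k}\sum_j S^j_m$, plus a crude second-moment bound for the $\le k-1$ leftover terms, which is where the $2d(k-1)$ arises and where the orthogonality assumption $\mathbb{E}[X_i\cdot X_j]=0$ is actually used). Your consecutive-block decomposition in the first paragraph yields $1$-dependent block sums $Y_j$, which are still dependent and hence cannot feed into Theorem \ref{thm_clt_quant_w2}; if you want to salvage a splitting approach, the interleaved (arithmetic-progression) decomposition is the one that works.
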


As an example, this theorem applies to $X_i := \underset{j=0}{\stackrel{k-1}{\prod}} \hspace{1mm} f_j(Y_{i + j})$ with the $Y_i$ i.i.d. random variables satisfying some Poincar\'e inequality and the $f_j$ bounded lipschitz functions with mean zero and unit variance. 

\begin{proof}
We can define the partial sums
$$S^j_m := \frac{1}{\sqrt{m}}\underset{i = 0}{\stackrel{m-1}{\sum}} \hspace{1mm} X_{i(k + 1) + j}$$
for $j = 1,..,k$. Then each $S^j_m$ is a sum of independent random variables, so we can apply Theorem \ref{thm_clt_quant_w2} to obtain convergence in $W_2$ distance to the Gaussian, i.e. 
$$W_2(\nu_{j,m}, \gamma)^2 \leq \frac{(C_p -1)d}{m}$$
where $\nu_{j,m}$ is the law of $S^j_m$. Since $S_{km} = \frac{1}{\sqrt{k}}\underset{j=1}{\stackrel{k}{\sum}} \hspace{1mm} S^j_m$, it is easy to check that $W_2(\nu_{km}, \gamma)^2 \leq \frac{1}{k}\sum W_2(\nu_{j,m}, \gamma)^2 \leq \frac{(C_p -1)d}{m}$. Moreover for any $j \in \{1,..,k-1\}$, $$W_2(\nu_{km + j}, \gamma)^2 \leq \frac{km}{km+j}W_2(\nu_{km}, \gamma)^2 + \frac{2dj}{km+j} \leq \frac{(C_p -1)d}{m} + \frac{2d(k-1)}{km},$$ where we split the sum and used the fact that the Wasserstein distance is bounded by the second moment to control the contribution of $X_{mk+1},..,X_{mk+j}$. 
\end{proof}

\subsection{Entropy bounds}

Let $H_{\gamma}(\nu) := \int d\nu \log \tfrac{d \nu}{d\gamma}$ denote the entropy of $\nu$ relative to $\gamma$ and $I_{\gamma}(\nu) :=\int{|\nabla \log f|^2d\gamma}$ denote the relative Fisher information.  The HSI inequality of \cite{LNP15} states that
$$H_{\gamma}(\nu) \leq \frac{S(\nu)^2}{2}\log\left(1 + \frac{I_{\gamma}(\nu)}{S(\nu)^2}\right).$$
As a consequence, we also have the following rate of convergence in the entropic CLT: 

\begin{prop}
Assume that $\nu$ satisfies a Poincar\'e inequality with constant $C_p$, and satisfies the Fisher information bound $I_{\gamma}(\nu) \leq \alpha d$. Then we have
$$H_{\gamma}(\nu_n) \leq \frac{d(C_p-1)}{2n}\log\left(1 + \frac{\alpha n}{(C_p - 1)}\right)$$
\end{prop}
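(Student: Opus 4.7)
The plan is to combine the HSI inequality with the Stein discrepancy bound from Corollary \ref{cor_main_thm} (propagated to $\nu_n$ via Theorem \ref{thm:SKconvolution}) and the standard monotonicity of relative Fisher information along the CLT.

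First, I would control the two ingredients appearing on the right-hand side of HSI. For the Stein discrepancy, Theorem \ref{thm:SKconvolution} applied with $m=1$ gives
$$S(\nu_n|\gamma)^2 \leq \tfrac{1}{n} S(\nu|\gamma)^2,$$
and Corollary \ref{cor_main_thm} (using isotropy, so $\int |x|^2 d\nu = d$) gives $S(\nu|\gamma)^2 \leq d(C_p - 1)$. Hence $S(\nu_n|\gamma)^2 \leq d(C_p-1)/n$. For the relative Fisher information, I would invoke the Stam--Blachman inequality (or equivalently the fact that Fisher information is non-increasing along the CLT under rescaling), which yields $I_\gamma(\nu_n) \leq I_\gamma(\nu) \leq \alpha d$.

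Next, apply HSI to $\nu_n$:
$$H_\gamma(\nu_n) \leq \frac{S(\nu_n|\gamma)^2}{2}\log\!\left(1 + \frac{I_\gamma(\nu_n)}{S(\nu_n|\gamma)^2}\right).$$
To substitute the above bounds, I need monotonicity of the map $\Phi(s,i) := \tfrac{s}{2}\log(1 + i/s)$. It is clearly non-decreasing in $i$; for monotonicity in $s$, setting $u = i/s$ reduces the sign of $\partial_s \Phi$ to that of $\log(1+u) - u/(1+u)$, which is non-negative for $u \geq 0$. Therefore both variables may be replaced by their upper bounds:
$$H_\gamma(\nu_n) \leq \Phi\!\left(\tfrac{d(C_p-1)}{n},\,\alpha d\right) = \frac{d(C_p-1)}{2n} \log\!\left(1 + \frac{\alpha n}{C_p-1}\right),$$
which is the desired estimate.

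The only non-routine input is the monotonicity of $I_\gamma(\nu_n)$ along the CLT, which is classical (and parallels the monotonicity established for $S$ in Theorem \ref{thm:SKconvolution}). Once that is accepted, the argument is essentially an algebraic combination of the previous results.
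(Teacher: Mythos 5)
Your argument is correct and is exactly the route the paper intends (the paper states this proposition as an immediate consequence of the HSI inequality without writing out the details): bound $S(\nu_n|\gamma)^2 \leq d(C_p-1)/n$ via Corollary \ref{cor_main_thm} and Theorem \ref{thm:SKconvolution}, bound $I_\gamma(\nu_n)\leq \alpha d$ by Blachman--Stam monotonicity, and substitute into HSI using the monotonicity of $s\mapsto \tfrac{s}{2}\log(1+i/s)$. Your explicit check of that last monotonicity is a detail the paper omits but which is genuinely needed to justify replacing $S(\nu_n|\gamma)^2$ by its upper bound.
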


Convergence to the Gaussian measure in entropy is strictly stronger than convergence in $W_2$, due to Talagrand's inequality \cite{Tal96}. The choice of scaling in the dimension for the upper bound on the Fisher information reflects the fact that for product measures, it is of order $d$. In dimension one, the actual rate of convergence in the entropic CLT is asymptotically    $1/n$ under a fourth moment condition \cite{BCG13}, and non-asymptotically $1/n$ if the entropy of a single random variable is bounded \cite{BCG14} (with a prefactor that is exponential in the entropy). When the Poincar\'e inequality holds a non-asymptotic rate was obtained in \cite{BBN03, ABBN04}, and extended to multi-dimensional random vectors having log-concave density in \cite{BN12}. Related results in dimension one were obtained in \cite{JB08}.

\subsection{Fisher information bounds}

In this section, we shall combine our main estimate with results of \cite{NPS14b} to obtain bounds on the Fisher information of a sum of independent random variables, to which we add a small Gaussian noise. To this end, recall  that the Fisher information of $\nu$ relative to $\gamma$ is defined as $I_{\gamma}(\nu) := \int{|\nabla \log f|^2d\gamma}$, where $f = \frac{d\nu}{d\gamma}$. After applying Theorem V.3 in \cite{NPS14b}, we get

\begin{prop}
Let $(X_i)$ be a collection of independent centered isotropic random variables in $\R^d$ with Poincar\'e constants $C_i$.  Let  $W_n :=\sqrt{1-t}Z +  \frac{\sqrt{t}}{\sqrt{n}}\underset{i=1}{\stackrel{n}{\sum}} \hspace{1mm} X_i,$ where $Z$ is a standard Gaussian random variable independent of the $X_i$. If $\nu_n^t$ denotes the law  of $W_n$, then 
$$I_{\gamma}(\nu_n^t) \leq \frac{t^2}{n^2(1-t)}\underset{i=1}{\stackrel{n}{\sum}} \hspace{1mm} (C_i-1)d.$$
In particular, if the $X_i$ satisfy a Poincar\'e inequality with same constant $C_p$, then
$$I_{\gamma}(\nu_n^t) \leq \frac{t^2(C_p -1)d}{n(1-t)}.$$
\end{prop}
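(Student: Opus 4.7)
The plan is to combine our Stein discrepancy bound on normalized sums (from the proof of Theorem \ref{thm_clt_quant_w2}) with the black-box Fisher information estimate of \cite{NPS14b}. Their Theorem V.3 is precisely the tool designed for this kind of application: it takes a random variable $Y$ admitting a Stein kernel and, for the convolved variable $W = \sqrt{1-t}\,Z + \sqrt{t}\,Y$ (with $Z$ an independent standard Gaussian), bounds the relative Fisher information by $\frac{t^2}{1-t}\,S(\nu_Y|\gamma)^2$. Given such a black box, all that remains is to control the Stein discrepancy of the normalized sum $Y_n := \frac{1}{\sqrt n}\sum_{i=1}^n X_i$.

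First, I would produce an explicit Stein kernel for $\nu_n$ by conditioning, exactly as in the proof of Theorem \ref{thm_clt_quant_w2}. Namely, by Theorem \ref{main_thm_stein} each $X_i$ admits a Stein kernel $\tau_i \in L^2(\nu_i)$, and the function
$$
\bar{\tau}_n(y) := \mathbb{E}\!\left[\frac{1}{n}\sum_{i=1}^n \tau_i(X_i)\;\Big|\;Y_n = y\right]
$$
is a Stein kernel for $\nu_n$; the verification is the same integration-by-parts computation already done for Theorem \ref{thm:SKconvolution} in the i.i.d.\ case, and goes through identically in the merely independent case because only independence (not identical distribution) is used.

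Next, using that conditional expectation is an $L^2$-contraction and that the $X_i$ are independent with $\mathbb{E}[\tau_i(X_i)] = \mathrm{Id}$ (so the cross terms $\int \langle \tau_i - \mathrm{Id}, \tau_j - \mathrm{Id}\rangle\,d\nu_i\,d\nu_j$ vanish for $i\neq j$), the same computation as in the proof of Theorem \ref{thm_clt_quant_w2} yields
$$
S(\nu_n|\gamma)^2 \;\leq\; \int \|\bar{\tau}_n - \mathrm{Id}\|_{\mathrm{HS}}^2\,d\nu_n \;\leq\; \frac{1}{n^2}\sum_{i=1}^n \int \|\tau_i - \mathrm{Id}\|_{\mathrm{HS}}^2\,d\nu_i \;\leq\; \frac{d}{n^2}\sum_{i=1}^n (C_i - 1),
$$
where the final inequality applies Corollary \ref{cor_main_thm} to each isotropic $X_i$. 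Plugging this estimate into Theorem V.3 of \cite{NPS14b} produces the asserted bound on $I_\gamma(\nu_n^t)$, and the i.i.d.\ specialization is immediate by taking $C_i \equiv C_p$.

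The only non-routine step is verifying that the hypotheses of Theorem V.3 of \cite{NPS14b} are met by our constructed Stein kernel $\bar{\tau}_n$ (in particular any regularity/integrability assumptions on the kernel), and that the prefactor $t^2/(1-t)$ from their statement matches our parametrization of the Gaussian interpolation. Everything else is an assembly of results already proved in this paper.
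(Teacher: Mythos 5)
Your proposal is correct and follows exactly the route the paper intends: the paper offers no further proof beyond "After applying Theorem V.3 in \cite{NPS14b}, we get," relying precisely on the Stein discrepancy bound $S(\nu_n|\gamma)^2 \leq \frac{d}{n^2}\sum_{i=1}^n (C_i-1)$ established in the proof of Theorem \ref{thm_clt_quant_w2} combined with the Fisher information estimate $I_\gamma(\nu_n^t)\leq \frac{t^2}{1-t}S(\nu_n|\gamma)^2$ from that reference. Your write-up is, if anything, more explicit than the paper's.
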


Due to Cramer's law \cite{Cr36}, weak convergence of $\nu_n^t$ to $\gamma$ is equivalent to convergence of $\frac{1}{\sqrt{n}}\underset{i=1}{\stackrel{n}{\sum}} \hspace{1mm} X_i$. Unfortunately, Cramer's law is unstable in general \cite{BCG12}, so we cannot directly deduce quantitative closeness of $X$ to a Gaussian if $X + Z$ is close to Gaussian for a general random variable $X$ (although the counterexamples of \cite{BCG12} do not seem to satisfy a Poincar\'e inequality, so it may be that under such an extra assumption Cramer's law would be stable). 

Rates of convergence in Fisher information of order $O(n^{-1})$ in dimension one when the information of a single variable is finite and under a Poincar\'e inequality have been obtained in \cite{JB08}.  In higher dimension, a quantitative bound on the difference between Fisher informations of $\nu_1$ and $\nu_2$ was obtained in \cite{johnson2004information}, but does not readily lead to a quantitative central limit theorem. 

\begin{rque}
Instead of using the results of \cite{NPS14b}, it is possible to derive upper bounds on $I_{\gamma}(\nu_n^t)$ by $W_2(\nu_n, \gamma)^2$ using the gradient flow structure of the Ornstein-Uhlenbeck flow, as done for example in Theorem 24.16 of \cite{Vil08}, and apply our bounds on the rate of convergence in $W_2$ distance to conclude. 
\end{rque}

\subsection{Stability of the Poincar\'e constant under a second moment constraint}

Combined with the previously mentioned fact that Stein discrepancy controls $W_2$ distance to $\gamma$, Corollary \ref{cor_main_thm} implies the following estimate: 

\begin{thm}\label{PIstable}
Let $\nu$ be a centered  probability measure on $\R^d$, normalized so that $\int |x|^2 d\nu = d$. Then its Poincar\'e constant $C_p$ satisfies
$$C_p \geq 1 + \frac{W_2(\nu, \gamma)^2}{d}.$$
\end{thm}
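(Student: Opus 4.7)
The plan is to chain two inequalities: Stein discrepancy bounds $W_2$ distance from above, and the Poincar\'e constant bounds Stein discrepancy from above (under the normalization). Combining them gives the lower bound on $C_p$ in terms of $W_2$.

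First, if $\nu$ does not admit a Poincar\'e inequality, then $C_p = +\infty$ and the inequality is vacuous, so assume $C_p < \infty$. Since $\nu$ is centered and satisfies a Poincar\'e inequality with constant $C_p$, Theorem \ref{main_thm_stein} guarantees the existence of a Stein kernel, and Corollary \ref{cor_main_thm} yields directly
$$S(\nu|\gamma)^2 \leq d(C_p - 1),$$
using the normalization $\int|x|^2 d\nu = d$.

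Next, I would invoke the fact (used in the proof of Theorem \ref{thm_stein_lb} and attributed to \cite{LNP15}) that whenever a Stein kernel exists, the Stein discrepancy dominates the quadratic Wasserstein distance to the standard Gaussian, namely
$$W_2(\nu,\gamma)^2 \leq S(\nu|\gamma)^2.$$
Chaining these two estimates produces $W_2(\nu,\gamma)^2 \leq d(C_p - 1)$, and rearranging gives the claimed bound $C_p \geq 1 + W_2(\nu,\gamma)^2/d$.

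There is essentially no obstacle here: the entire content lies in the two previously established ingredients, and no new computation is required. The only subtlety worth noting is that the Stein kernel minimizing the discrepancy need not be the one constructed in Theorem \ref{main_thm_stein}; but since the infimum defining $S(\nu|\gamma)^2$ is over \emph{all} Stein kernels, the upper bound from Corollary \ref{cor_main_thm} applies to $S(\nu|\gamma)^2$ itself, which is all that is needed.
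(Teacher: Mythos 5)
Your argument is exactly the paper's: the theorem is stated as an immediate consequence of Corollary \ref{cor_main_thm} combined with the fact from \cite{LNP15} that the Stein discrepancy dominates $W_2(\nu,\gamma)$, precisely the two ingredients you chain. The proposal is correct, and your remark about the infimum over all Stein kernels is a fair (if minor) point of care.
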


This estimate is a quantitative reinforcement of the fact that among all  probability measures with the same second moment, the Gaussian has the best Poincar\'e constant. More generally, it is a reinforcement of the fact that, given a sequence of centered measures $(\nu_n)$ with $\int{|x|^2d\nu_n} = d$, if their Poincar\'e constants converge to $1$, then $\nu_n$ weakly converges to the standard Gaussian \cite{BU84}. Once again, we note that this estimate depends optimally in the dimension. 

In a different direction, De Philippis and Figalli \cite{DF16} recently showed a similar quantitative stability result among a different class of measures: for densities that are of the form $e^{-V}\gamma$ with $V$ convex, the Gaussian has the worst Poincar\'e constant, and we have a deficit of the form $1 - C_p \leq \epsilon \Longrightarrow W_1(\nu, \gamma) \leq C(d, \alpha)|\log \epsilon |^{-1/4 + \alpha}$ for any $\alpha > 0$, for $\epsilon$ small enough. Our results are not directly comparable, since they concern completely different classes of measures. We just note that the dependence in $\epsilon$ in the result of \cite{DF16} is not expected to be sharp.  Indeed, in dimension one they show that $W_1(\nu, \gamma) \leq C\epsilon$. In spirit, this question is also similar to the stability problem for the Szeg\"o-Weinberger inequality, that was solved in \cite{BP11}. 

Finally, we observe that Theorem \ref{main_thm_stein} leads to the more general analogous  result for measures satisfying a converse weighted Poincar\'e inequality.  Although it is easily seen that such inequalities are stable under log-bounded transformations of the measure \cite{CGGR10}, the following appears to be the first quantitative stability result along these lines:
\begin{thm}
Let $\nu$ be a centered  probability measure on $\R^d$, normalized so that $\int |x|^2 d\nu = d$.  If $\nu$ satisfies a converse weighted Poincar\'e inequality with weight function $\omega$, then
$$\frac{1}{d}\int |x|^2 \omega^{-1} d\nu  \geq 1 + \frac{W_2(\nu, \gamma)^2}{d}.$$
\end{thm}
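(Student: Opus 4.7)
The plan is to combine Theorem \ref{main_thm_stein} with the bound $S(\nu|\gamma)^2 \geq W_2(\nu,\gamma)^2$ (mentioned in the excerpt, from \cite{LNP15}), exactly in the spirit of the proof of Theorem \ref{PIstable}. The only new ingredient compared to the Poincar\'e case is that the upper bound on $\int \|\tau_\nu\|_{\mathrm{HS}}^2 d\nu$ is now $\int |x|^2\omega^{-1} d\nu$ instead of $C_p \int|x|^2 d\nu = C_p d$.

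First I would invoke Theorem \ref{main_thm_stein} to produce a Stein kernel $\tau_\nu \in L^2(\nu)$ satisfying the bound $\int \|\tau_\nu\|_{\mathrm{HS}}^2 d\nu \leq \int |x|^2 \omega^{-1} d\nu$. Next I would compute $\int \mathrm{tr}(\tau_\nu) d\nu$ by inserting the test function $\varphi(x) = x$ (which lies in $W^{1,2}_\nu$ thanks to the finite second moment and to $\nabla \varphi = \mathrm{Id}$) into the defining identity \eqref{eq_stein}:
\begin{equation*}
\int \mathrm{tr}(\tau_\nu)\, d\nu = \int \langle \tau_\nu, \mathrm{Id}\rangle_{\mathrm{HS}} d\nu = \int x\cdot x\, d\nu = d,
\end{equation*}
where the normalization $\int |x|^2 d\nu = d$ is used.

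Then I would expand the squared Stein discrepancy:
\begin{equation*}
S(\nu|\gamma)^2 \leq \int \|\tau_\nu - \mathrm{Id}\|_{\mathrm{HS}}^2 d\nu = \int \|\tau_\nu\|_{\mathrm{HS}}^2 d\nu - 2\int \mathrm{tr}(\tau_\nu)d\nu + d = \int \|\tau_\nu\|_{\mathrm{HS}}^2 d\nu - d.
\end{equation*}
Combining with the Stein kernel bound and with $W_2(\nu,\gamma)^2 \leq S(\nu|\gamma)^2$ gives
\begin{equation*}
W_2(\nu,\gamma)^2 \leq \int |x|^2 \omega^{-1} d\nu - d,
\end{equation*}
and dividing by $d$ yields the desired inequality.

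There is no serious obstacle here: both inputs (the Stein kernel construction from Theorem \ref{main_thm_stein} and the $W_2 \leq S$ domination from \cite{LNP15}) are already in hand, and the only computation needed is the trace identity $\int \mathrm{tr}(\tau_\nu)d\nu = \int |x|^2 d\nu$, which is immediate from the Stein relation with $\varphi(x)=x$. The centering hypothesis is used implicitly since Theorem \ref{main_thm_stein} requires it for existence of the Stein kernel.
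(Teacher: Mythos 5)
Your proposal is correct and follows exactly the route the paper intends: the paper states this theorem as an immediate consequence of Theorem \ref{main_thm_stein} combined with the bound $W_2(\nu,\gamma)^2 \leq S(\nu|\gamma)^2$ from \cite{LNP15}, and your explicit computation (the trace identity via $\varphi(x)=x$ and the expansion of $\int\|\tau_\nu-\id\|_{\mathrm{HS}}^2 d\nu$) is precisely the same calculation used to derive the Stein discrepancy bound inside Theorem \ref{main_thm_stein} itself. The only cosmetic remark is that when $\int |x|^2\omega^{-1}d\nu = \infty$ the claim is vacuous, so the finiteness hypothesis of Theorem \ref{main_thm_stein} may be assumed without loss of generality.
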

By  H\"{o}lder's inequality, the following corollary is immediate:
\begin{cor}
Let $\nu$ be a centered  probability measure on $\R^d$, normalized so that $\int |x|^2 d\nu = d$.   If $\nu$ satisfies a converse weighted Poincar\'e inequality with weight function $\omega$, then for H\"older conjugate exponents $1\leq p,q\leq \infty$, 
$$   {\| \tfrac{1}{d}|x|^2 \|_{L^p(\nu)}} \| \omega^{-1} \|_{L^q(\nu)}  \geq 1 + \frac{W_2(\nu, \gamma)^2}{d}.$$
\end{cor}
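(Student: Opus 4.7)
The plan is to derive this corollary directly from the preceding theorem by applying H\"older's inequality to the integral $\int |x|^2 \omega^{-1} d\nu$. The preceding theorem supplies the lower bound
$$\frac{1}{d}\int |x|^2 \omega^{-1} d\nu \geq 1 + \frac{W_2(\nu,\gamma)^2}{d},$$
so it suffices to produce an upper bound for this same quantity in terms of the norms $\|\tfrac{1}{d}|x|^2\|_{L^p(\nu)}$ and $\|\omega^{-1}\|_{L^q(\nu)}$ and chain the two inequalities.

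Concretely, I would write $|x|^2 \omega^{-1} = \bigl(\tfrac{1}{d}|x|^2\bigr)\cdot \omega^{-1} \cdot d$ under the integral, apply H\"older with conjugate exponents $p$ and $q$ to the pair of functions $\tfrac{1}{d}|x|^2$ and $\omega^{-1}$, pulling the overall factor $d$ outside; this yields
$$\frac{1}{d}\int |x|^2 \omega^{-1} d\nu \leq \bigl\|\tfrac{1}{d}|x|^2\bigr\|_{L^p(\nu)}\bigl\|\omega^{-1}\bigr\|_{L^q(\nu)}.$$
Combining with the quoted lower bound immediately gives the claimed inequality. The endpoint cases are handled by the usual convention $\|\cdot\|_{L^\infty(\nu)}$ for essential supremum; in particular, $(p,q)=(1,\infty)$ recovers the content of the preceding theorem exactly (since $\|\tfrac{1}{d}|x|^2\|_{L^1(\nu)} = 1$ by the second moment normalization), while $(p,q)=(\infty,1)$ gives a variant involving the $\nu$-essential sup of $|x|^2/d$.

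There is no real obstacle here: the step is a one-line application of H\"older's inequality, which is precisely why the authors present it as an immediate corollary. The only mild points of care are ensuring the integrals are finite (guaranteed by the hypothesis of the preceding theorem, which implicitly requires $\int |x|^2 \omega^{-1} d\nu < \infty$) and being consistent with the $L^\infty$ endpoint conventions.
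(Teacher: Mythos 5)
Correct, and this is precisely the paper's intended one-line argument: H\"older's inequality gives $\tfrac{1}{d}\int |x|^2\omega^{-1}\,d\nu \le \|\tfrac{1}{d}|x|^2\|_{L^p(\nu)}\,\|\omega^{-1}\|_{L^q(\nu)}$, which chains with the preceding theorem's lower bound. The only slip is in your aside: the endpoint $(p,q)=(1,\infty)$ yields $\|\omega^{-1}\|_{L^\infty(\nu)} \ge 1 + W_2(\nu,\gamma)^2/d$, which is Theorem \ref{PIstable} (the constant-weight/Poincar\'e case), not an exact recovery of the preceding theorem's sharper bound involving $\tfrac{1}{d}\int |x|^2\omega^{-1}\,d\nu$.
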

\noindent Of course, Theorem \ref{PIstable} coincides with the special case where $p=1$ and $q=\infty$.

\vspace{5mm}

\textbf{Acknowledgments} This work benefited from support from the France-Berkeley fund and the Labex CIMI. M. F. was partly supported by NSF FRG grant DMS-1361122 and Project EFI (ANR-17-CE40-0030) of the French National Research Agency (ANR). T.C. and A.P. were supported in part by NSF Grants CCF-1528132 and CCF-0939370 (Center for Science of Information). We thank Thomas Bonis, Thomas Gallou\"et, Michel Ledoux and Ivan Nourdin for their advice and comments, and the anonymous referee for remarks that helped improve this manuscript.

\end{document}